\newtheorem{theorem}{Theorem}[section]
\newtheorem*{prop1}{Proposition 1}
\newtheorem*{prop2}{Proposition 2}
\newtheorem*{thm1}{Theorem 1}
\newtheorem*{thm2}{Theorem 2}
\newtheorem*{thm3}{Theorem 3}
\newtheorem{lemma}[theorem]{Lemma}
\newcommand{\HH}{\mathcal{H}}
\newcommand{\LH}[1]{\Lambda_R(n,\HH_{#1},k_{#1}+l_{#1})}
\newcommand{\lH}[2]{\lambda_R(d_{#1},k_{#2}+l_{#2})}
\newcommand{\lo}[1]{\Omega_{#1}}
\newcommand{\gs}{\mathfrak{S}}
\newcommand{\bv}{{\bf v}}
\title{The Goldston-Pintz-Y{\i}ld{\i}r{\i}m Sieve and Maximal Gaps}
\author{Hakan Ali-John Seyalioglu}
\address{Department of Mathematics, University of California - Los Angeles, Los Angeles, CA 90024, USA}
\email{hseyalioglu@ucla.edu}
\thanks{This research was conducted under the support of a Fulbright Student Grant administered
 by the Hungarian Fulbright Commission while visiting the Alfr\'{e}d R\'{e}nyi Mathematical Institute
  of the Hungarian Academy of Sciences. \\{\it Keywords:} GPY Sieve; Selberg Sieve; Sieve Methods; Blocks of Primes;
   Maximal Gaps; Singular Series. \\{\it 2000 Mathematics Subject
Classification:} Primary 11N36, Secondary 11N25, 11N05.}
\begin{document}
\maketitle
\section{Introduction}

One field of particular interest in Number Theory concerns the gaps between consecutive primes. Within the last
few years, very important results have been achieved on how small these gaps can be. The strongest of these
results were obtained by Dan Goldston, J{\'a}nos Pintz and Cem Yal\c{c}{\i}n Y{\i}ld{\i}r{\i}m. The present work
begins by generalizing their results so that they can be applied to related problems in a more direct manner.
Additionally, we improve the bound for $F_2$ (concerning the maximal gap in a block of three primes) obtained by
the results of \cite{goldston-2005} with our generalization.

\subsection{Previous Work}

The  first result of Goldston, Pintz and Y{\i}ld{\i}r{\i}m \cite{goldston-2005} states  \footnote{In earlier
works \cite{huxley-1969}, the constant $\Delta_r$ is referred to as $E_r$. However, this has been dropped in
more recent works \cite{goldston-2005, goldston-2006} to avoid a notational conflict.}
\begin{equation*}
\Delta_r = \liminf_{n \to \infty} {p_{n+r} - p_n \over \log{p_n}} \leq(\sqrt{r}-1)^2
\end{equation*}
\noindent and in particular $\Delta_1 = 0$. Using related methods and
incorporating ideas from Maier's matrix method \cite{maier-1988}, the
authors were able to later improve this result by a factor of $e^{-
\gamma}$ \cite{goldston-2006}. In proving this result, the authors
developed a new sieve method, based closely on that of Selberg, to
estimate the number of primes within an interval. Let $\HH = \{h_1, h_2,
\ldots h_k \} $ and $P(n, \HH) = (n+h_1)(n+h_2) \ldots (n+h_k)$. Using the
notation of \cite{motohashi-2005}, for $l \geq 0$:
\begin{align*}
\Lambda_R(n,\HH,k+l)= {1 \over
(k+l)!}\displaystyle\sum_{\substack{d|P(n,\HH) \\ d \leq R} }
\mu(d)\log\left({R \over d} \right)^{k+l}.
\end{align*}

The main results of \cite{goldston-2005} follow from two related
estimates, enumerated below. Throughout this paper $C$ and $c$ are
absolute constants which may differ at every occurrence. If an implied
constant depends on a value this dependence will be denoted with a
subscript of the value the constant depends on (for example $\ll_M$,
$o_M(1)$, $C_M$ denote dependence on $M$). Define $\HH = \HH_1 \cup \HH_2
\subset [1, 2 , \ldots h]$, $|\HH_i| = k_i$, $|\HH_1 \cap \HH_2| = r$, $M
= k_1 + k_2 + l_1 + l_2$.

\begin{prop1} \label{prop1} If $R \ll N^{1 \over 2}(\log N)^{-4M}$ and $h \leq R^{C}$ for any
$C > 0$, then, as $R, N \to \infty$, \\
\noindent $\displaystyle \sum_{N<n \leq 2N} \LH 1 \LH 2 = $ \vspace{-.1in}\\

$\hfill \displaystyle {l_1 + l_2
\choose l_1}{(\log R)^{r + l_1 + l_2} \over {r + l_1 + l_2!}} ( \gs(\HH) +
o_M(1))N.$
\end{prop1}

With this proposition, the authors are able to understand how the weights
behave in an unmodified fashion over an interval. Their second task is to
see how these weights are modified by incorporating the $\theta$ function
($\theta(n) = \log n$ if $n$ is a prime and $0$ otherwise).

\begin{prop2} \label{prop2} Take $h_0 \not \in \HH$. If $R \ll N^{1 \over 4}(\log
N)^{-B_M}$ for a sufficiently large constant $B_M$, and $h \leq R$
then, as $R, N \to \infty$, \\

\noindent $\displaystyle \sum_{N<n \leq 2N} \theta (n + h_0) \LH 1 \LH 2 = $ \vspace{-.1in}\\

$\hfill \displaystyle {l_1 + l_2 \choose l_1}{(\log R)^{r + l_1 + l_2}
\over {r + l_1 + l_2!}} ( \gs(\HH \cup h_0) + o_M(1))N.$
\end{prop2}

With the observation that if $n+ h_0$ is a prime and $h_0 \in \HH$,
$\Lambda_R(n,\HH,k+l)= \Lambda_R(n,\HH \setminus {h_0} ,(k-1)+(l+1))$ it
is then possible to work around the restriction that $h_0 \not \in \HH$,
providing a result for general $h_0$. \vspace{-.1in}

\subsection{Results}

Summing over all values for $h_0$ in an interval, the authors are able to
get a sum of the logarithms of all primes that lay in the designated
interval when multiplied by the $\Lambda$ weight functions. However, a
natural problem is how to sum over more complex distributions of primes.
Instead of a sum over all primes in an interval, consider the problem of
wanting a sum over all primes $p$ such that $p+j_1$ and $p+j_2$ are also
primes. This problem is too complex for a strict asymptotic in this
fashion, however, by modifying the sieve to deal with four $\Lambda$
functions instead of two, we give a method by which an upper bound can be
reached for the logarithms of such pairs of primes multiplied by the
weight function. Say we need an upper bound for: \vspace{-.15in}

\begin{equation} \label{eq1} \sum_{n = N+1} ^ {2N} \theta(n+h_0) \theta(n+h_1)
\left( \sum_{\HH} \LH{} \right)^2.  \end{equation}

The $\Lambda$ function behaves very predictably when $n+h_0$ and $n + h_1$
are prime. Consulting the definition, one quickly derives that (assuming $h, R < N$):
\begin{equation*}
\theta(n+h_0) \theta(n+h_1) \leq \log ^2 3N \left( \Lambda(n, \{h_0,
h_1\}, 2)  {2 \over \log^2 R} \right) ^2.
\end{equation*}
Additionally, if $n + h_0$ and $n+ h_1$ are both prime, $h_0$ and
$h_1$ do not effect the second $\Lambda$ function.
Therefore, letting $\Omega := 4 \log^2 (3N) / \log^4 R$,
$$\displaystyle (\ref{eq1}) \leq  \Omega  \sum_{n = N+1} ^ {2N}
\Lambda(n, \{h_0, h_1\}, 2)  ^2  \left( \sum_{\HH} \Lambda(n, \HH \setminus \{h_0, h_1 \}, k +l) \right)^2 . $$

From this we can see that one way to attack the previously mentioned and similar problems is to understand how
four $\Lambda$ functions act when the first two and second two take
disjoint sets as their second arguments. {\it Theorem 1} addresses this
problem. Define $|\HH_i| = k_i$, $\HH_i
\subset [1, h]$, $|\HH_1 \cap \HH_2| = r_1$, $|\HH_3 \cap \HH_4| = r_2$ and $M = \sum_{i=1}^4 k_i + l_i$.
\begin{thm1} \label{resultthm}
 Let $(\HH_1 \cup \HH_2)
\cap (\HH_3 \cup \HH_4) = \emptyset$, $R \ll N^{1/4} \log(N)^{-C_M}$ for a sufficiently large $C_M$ and for any $C >0$, $h \ll R^C$.
Then, letting $u = l_1 + l_2 + r_1$, $v = l_3 + l_4 + r_2$ as $R, N \to
\infty$, \\

$\label{original} \displaystyle \hspace{-.15in} \sum_{N<n \leq 2N} \hspace{-.15in}\LH 1 \LH 2 \LH 3 \LH4
= $
$$\displaystyle \hfill {l_1 + l_2 \choose l_1}{l_3 + l_4 \choose l_3}N {(\log R)^{u + v}
\over {u!v}!} \left( \gs(\HH_1 \cup \HH_2 \cup \HH_3 \cup \HH_4) + o_M(1) \right). $$
\end{thm1}

When applying this sieve result however, a second problem presents itself if one does not want the $\HH_i$ sets all taken uniformly from the same
interval. In applying the
Propositions of \cite{goldston-2005} a result of Patrick
Gallagher \cite{gallagher-1976} is used on the average of singular series when
the sets under consideration are taken uniformly from an interval.
But, if both pairs of sets are taken from different intervals, their
union (which is considered in the singular series) may not be uniformly
varying over all $k$ element subsets of a given interval, but instead over a
more complex distribution (e.g., over all sets $\HH \subset [1,h]$ with two elements from $[1, h']$ and one element from $[h',h'']$). It is with this in mind that we present {\it Theorem 2}, a more
general version of Gallagher's result. We show that instead of varying the
set uniformly over one interval, if we instead take several subsets which
vary uniformly over subintervals, the singular series of their union will
still average to $1$ assuming the subintervals obey a certain growth condition. Kevin Ford's \cite{ford-2007} recent simplification
of Gallagher's proof is the foundation for the extension presented. Let
$\sum_{i=1}^l k_i = r$ and $\Omega_{\mathcal{H}}(p)$ be the residue classes
occupied by the elements of $\mathcal{H}$ modulo $p$.

\begin{thm2}

Take an interval $[0, h]$ and take $l$ subintervals $[B_i(h), C_i(h)] \subset
[0, h]$ where $C_i(h) - B_i(h) = d_i (h)$. Assume for some $1 > \delta > 0$,
and for all $i \leq l$, $h^\delta = o(d_i (h))$. Then, as $h \to \infty$,
\begin{equation*} \label{originaleq}
 \sum_{\substack{A_1, A_2, \ldots A_l: \\ A_i \subset [B_i(h), C_i(h)], |A_i| = k_i}} \gs \left( \bigcup_{i =1 }^l A_i \right) = \prod_{i = 1}^l {d_i^{\;
k_i}  (h) \over k_i !} \left( 1 + o_{r, \delta}(1)) \right).
\end{equation*}
\end{thm2}
Finally, we apply these two results into a concrete application. Restating
the usual definition (notice $F_n$ is trivially bounded above by $\Delta_n$):
$$
F_r = \liminf_{n \to \infty} { \max_{1 \leq i \leq r} {p_{n +i} - p_{n
+ i - 1} \over \log{p_n}}}.
$$

\begin{thm3} There exists a $c > 0$ such that $F_2 \leq c < (\sqrt{2} - 1)^2$.  \end{thm3}
Where the $c$ is explicitly computable and we give such a $c$. While this
falls short of improving the best known result for $F_2$, which is the current best bound for $\Delta_2$ given
in \cite{goldston-2006} as $ e^{- \gamma} (\sqrt{2} - 1)^2$, since the previous bound is the
result for $\Delta_2$ of \cite{goldston-2005} with Maier's matrix method
applied to it, it seems likely that a similar application of the matrix
method would provide a corresponding improvement. %However, as applying the
%matrix method is a substantial undertaking (the entire subject of
%\cite{goldston-2006} is applying this method to \cite{goldston-2005} where
%it is conjectured to be possible) this is not applied in the current
%paper.
The best $c$ our method gives is approximately $.1707$, a modest
improvement over \cite{goldston-2005}'s $.1716$. Our proof relies on breaking the interval considered in \cite{goldston-2005}
into three and modifying the weighted difference to ensure that positivity implies either a prime occurring in the middle interval or three primes occurring in an end interval. {\it Theorems 1} and {\it 2}
are provided with precise error terms in their corresponding sections. \\

The only widely available work which has managed to distance the best known
bounds for $\Delta_n$ and $F_n$ for any $n$ is that of Huxley \cite{huxley-1969} (for the case $n=2$ it was shown approximately that $\Delta_2 \leq 1.4105$ and $F_2 \leq 1.3624$).
The lack of other results should not imply a disinterest in the $F_n$ constants; the question of $F_2 < 1$ is attributed to Erd\H{o}s \cite{maier-1988}. As distancing the $F_n$ and $\Delta_n$ constants
have proven very difficult, Erd\H{o}s' problem was not resolved until it was shown that $\Delta_2 < 1$ by Maier applying his matrix method to Huxley's results \cite{maier-1988}.
Currently the best bound we have for $F_2$ is the trivial one afforded by $\Delta_2$ in \cite{goldston-2006}. This paper is a first step in distancing the two constants, with the only
 conjectured additional result needed to give an improvement being a successful application of Maier's method \cite{maier-1985}. It should be stressed that
 applying Maier's method is not at all trivial (the entire subjects of \cite{goldston-2006} and \cite{maier-1988} are applying the method to \cite{goldston-2005} and \cite{huxley-1969} respectively).
\subsection{Acknowledgements} I would like to give profound thanks to Professors Antal Balog and Andr\'{a}s Bir\'{o} for
supervising this project and for all their time and guidance during my time in Budapest. I was originally given
this problem by Professor J\'{a}nos Pintz and his comments during the project were invaluable. I also owe a great deal to the R\'{e}nyi Institute's hospitality and the Hungarian Fulbright Commission's generosity. Many helpful comments and corrections were also provided by the anonymous reviewer.

\section{Proof of {Theorem 1}}

\begin{thm1} \label{resultthm}
 Let $(\HH_1 \cup \HH_2)
\cap (\HH_3 \cup \HH_4) = \emptyset$, and for any $C >0$, $h \ll R^C$.
Then,  for any $\gamma > 0$, letting $u = l_1 + l_2 + r_1$, $v = l_3 + l_4
+ r_2$ and as $R, N \to \infty$, \vspace{.1in}

$\hspace{-.2in}\label{original}\displaystyle\sum_{N<n \leq 2N} \hspace{-.1in} \LH 1 \LH 2 \LH 3 \LH4
= $

$$\displaystyle {l_1 + l_2 \choose l_1}{l_3 + l_4 \choose l_3}N
{(\log R)^{u + v} \over {u!v}!}\gs(\bigcup_{0<i\leq4} \HH_i) + $$

$\displaystyle \hfill O_{M,
\gamma}(N(\log N)^{u + v - 1 + \gamma} + R^4 (\log R)^{C_M}). $

\end{thm1}

\subsection{Outline}

The proof of {\it Theorem 1} follows the same main outline as the proof of
{\it Proposition 1} from \cite{goldston-2005}, with a few alterations to
allow four weights instead of two. It will be necessary to use {\it Lemma
3} from \cite{goldston-2005} in the analysis, which is stated in {\it
Section \ref{GPYSection}}. We outline the proof below.

\begin{enumerate}
\item Translate the product of $\Lambda$ functions into to a a complex integral and translate part of the integrand into an Euler product - {\it
Section \ref{RewriteProd}}
\item Estimate the error term from the translation - {\it
Section \ref{TheFirstError}}
\item We are now left with a complex integral over four variables to
estimate. We introduce a series of zeta functions which estimate the
function and prove a lemma on how well these product of zeta functions
estimate our integrand - {\it Lemma \ref{GLemma}} - It is at this point
that our assumption about the disjointness of the unions of the two pairs
of sets is vital - {\it Section \ref{ZetaSection}}
\item By our choice of the zeta weights we are almost able to separate the integral over four variables
into two double integrals, however, there is some interplay in the $G$
function (which represents the error with which the product of the zeta
functions estimate our product). We show that the $G$ function is small
enough when two of the variables are fixed and non-negative to use {\it
Lemma \ref{GPYLemma}} twice - {\it Section \ref{SplittingSection}}
\end{enumerate}

\subsection{Rewriting the Product} \label{RewriteProd}
First let,
$$\lambda_R(d;a) =\left\{
  \begin{array}{ll}
    0 & \hbox{\text{ if } $d>R$;} \\
    \displaystyle {1 \over a!} \mu(d) \left( \log {R \over d}\right)^a & \hbox{\text{ if } $d \leq R$}.
  \end{array}
\right.$$ If we let $\Omega_i(p)$ be defined as the set of different
residue classes among $-h \mod p$ where $h \in \HH_i$ and extend it
multiplicatively (as in \cite{motohashi-2005} and \cite{goldston-2005}), \\

$\displaystyle \hspace{-.2in} \sum_{N<n \leq 2N} \hspace{-.1in} \LH 1 \LH 2 \LH 3 \LH 4$ \bigskip

$\displaystyle \hspace{-.1in}= \hspace{-.25in}\sum_{d_1, d_2, d_3, d_4} \hspace{-.1in} \lH 1 1 \lH 2 2 \lH 3 3 \lH 4 4
\hspace{-.3in} \sum_{\substack{N < n \leq 2N \\ n \in \Omega_1(d_1), n \in
\Omega_2(d_2)
\\ n \in \Omega_3(d_3), n \in \Omega_4(d_4)}}  \hspace{-.3in} 1.
$

From this one derives that: \\

$\displaystyle \hspace{-.15in}\sum_{N<n \leq 2N} \hspace{-.15 in} \LH 1 \LH 2 \LH 3 \LH 4 = $ 

$N T + T'$ where $\displaystyle T = \sum_{d_1, d_2, d_3, d_4}  {|\Phi(d_1, d_2, d_3,
  d_4)| \over [d_1, d_2, d_3, d_4]} \times$

$ \hfill \lH 1 1 \lH 2 2 \lH 3 3 \lH 4 4 $, and \\

$\displaystyle  T' = O( \sum_{d_1, d_2, d_3, d_4} |\Phi(d_1, d_2, d_3, d_4)| \times $\\

$\displaystyle \hfill |\lH 1 1 \lH 2 2 \lH 3 3 \lH 4 4 | ). $ \\

With $\Phi(\cdot, \cdot, \cdot, \cdot)$ defined over prime quadruples,
where $\beta_i \in \{0,1 \}$, \\ $\Phi (p^{\beta_1}, p^{\beta_2},
p^{\beta_3}, p^{\beta_4})$ = $|  \bigcap_{i : \beta_i = 1} \lo i (p) |$
and extended multiplicatively, where $\beta_i = 1$ if $p | n_i$ and $0$
otherwise, $\Phi(n_1, n_2, n_3, n_4)  = \prod_p \Phi(p^{\beta_1},
p^{\beta_2}, p^{\beta_3}, p^{\beta_4}) $.

\subsection{The First Error Term} \label{TheFirstError}In this section we will show \\ $T' = O_M(R^4 (\log
R)^{C_M})$, giving the second error term in {\it Theorem 1.} Notice that
the $\lH { } { }$ factors can be bound with a constant power of $\log R$
depending only on $k$ and $l$. It remains to bound:

$\displaystyle \sum_{d_1, d_2, d_3, d_4 < R} \hspace{-.2in} \Phi(d_1, d_2, d_3, d_4)$,
which is bounded above by $ \displaystyle \prod_{0< i \leq 4} \left[
\sum_{d < R} |\lo i (d)| \right]^4.$

Which is bounded by $R^4 (\log R)^{C_M}$ because $|\lo i (d)|$ is bounded by
the $k_i^{\text{th}}$ generalized divisor function. It is worth noting the
$R^{4}$ in this error term. In all applications which do not assume
Elliot-Halberstam type results on the distribution of primes, Goldston,
Pintz and Y{\i}ld{\i}r{\i}m take $R = N^{{1 \over 4} - \epsilon}$ due to
bounds given by the Bombieri-Vinogradov theorem. In our application, we
have this restriction in an unrelated point in the analysis. %This implies
%that while an improvement on the Bombieri-Vinogradov theorem would improve
%the original results of Goldston, Pintz and Y{\i}ld{\i}r{\i}m quite a bit,
%such an improvement would not be immediately extended to this analysis. It
%is also possible that the $R$ value taken in the first two weights and the
%second two might be taken to be different to address this problem.

\subsection{Introducing the Zeta Weights} \label{ZetaSection}

The next step is to write the formula as an Euler product. Using the
complex analytic equality (the integral is taken over $s$ with $\Re(s) = 1$):

\begin{equation*}
\lambda_R(d;a) = {\mu(d) \over 2 \pi i} \int_{(1)} \left( R \over d
\right) ^s {ds \over s^{a+1}},
\end{equation*}

$$ T = { 1 \over (2 \pi i)^4} \int_{(1)} \int_{(1)} \int_{(1)} \int_{(1)} {F(s_1, s_2, s_3, s_4) R^{s_1 +
    s_2 + s_3 + s_4} \over s_1^{k_1 + l_1 +1} s_2^{k_2+l_2+1} s_3^{k_3+l_3+1}s_4^{k_4+l_4+1}}ds_1 ds_2 ds_3 ds_4.$$

\noindent Where,
$$ F(s_1, s_2, s_3, s_4) = \displaystyle\sum_{d_1, d_2, d_3,
  d_4}\mu(d_1)\mu(d_2)\mu(d_3)\mu(d_4){|\Phi(d_1, d_2,d_3, d_4)| \over
  [d_1, d_2, d_3, d_4] d^{s_1}_1 d^{s_2}_2 d^{s_3}_3 d^{s_4}_4} = $$

\begin{equation*}
\displaystyle\prod_p  [ 1 - {|\lo 1 (p)|\over
      {p^{s_1+1}}} - {|\lo 2 (p)|\over {p^{s_2+1}}}+ {|\lo 1 (p) \cap \lo 2 (p)|\over {p^{s_1 + s_2+1}}}
- {|\lo 3 (p)| \over {p^{s_3 +1}}} - {|\lo 4 (p)| \over {p^{s_4 +1}}} +
    {|\lo 3 (p) \cap \lo 4 (p)|\over {p^{s_3 + s_4 +1}}} \end{equation*}
\begin{equation*}
+ {|\lo 1(p) \cap \lo 3(p)|\over {p^{s_1 + s_3 +1}}} + {| \lo 2 (p) \cap
\lo 3 (p)| \over {p^{s_2 + s _3 + 1}}} +
  {|\lo 1 (p) \cap \lo 4 (p)|\over {p^{s_1 + s_4 + 1}}} + {|\lo 2 (p) \cap \lo 4 (p)|\over {p^{s_2 + s_4 + 1}}} \end{equation*}
\begin{equation*}
-  {|\lo 1(p) \cap \lo 2 (p) \cap \lo 3 (p)| \over {p^{s_1 + s_2 + s_3 +
1}}} - {|\lo 1(p) \cap \lo 2 (p) \cap \lo 4 (p)| \over {p^{s_1 + s_2 + s_4
+ 1}}} \\  - {|\lo 1(p) \cap \lo 3 (p) \cap \lo 4 (p)| \over {p^{s_1 + s_3
+ s_4 + 1}}}
\end{equation*}
\begin{equation*}
 - {|\lo 2(p) \cap \lo 3 (p) \cap \lo 4 (p)| \over {p^{s_2 +
s_3 + s_4 + 1}}} + {|\lo 1(p) \cap \lo 2 (p) \cap \lo 3 (p) \cap \lo 4
(p)| \over {p^{s_1 + s_2 + s_3 + s_4 + 1}}} \text{ \huge ) }.
\end{equation*}

In order to express this in a simpler fashion, define the following
function. Take $T \subset \{1,2,3,4\}$ and let $s(T) = \sum_{t \in T} s_t
+ 1$, $\Omega_T(p) = \bigcap_{t\in T} \Omega_t(p)$ and $E_T (p) =
|\Omega_T(p)| /  p^{s(T) }$. Rewriting the $F$ function,$$ \displaystyle
F(s_1, s_2, s_3, s_4) = \prod_p [1 + \hspace{-.2in} \sum_{\substack{T
\subset \{1,2,3,4\}
\\ T \neq \emptyset}} \hspace{-.2in}(-1)^{|T|} E_T(p) ].
$$
We now define our version of the $G$ function and prove a lemma on
its growth that we will need later. \\

$ G(s_1, s_2, s_3, s_4) =$

$ \displaystyle \hfill F(s_1, s_2, s_3, s_4) \left( { \zeta(s_1 +1) ^ {k_1} \zeta(s_2+1) ^ {k_2} \over
\zeta(s_1 + s_2 + 1) ^ {r_1}} \right) \left( { \zeta(s_3 +1) ^ {k_3}
\zeta(s_4+1) ^ {k_4} \over \zeta(s_3 + s_4 + 1) ^ {r_2}} \right).$ \vspace{.2in}

\noindent First, let $\HH = \HH_1 \cup \HH_2 \cup \HH_3 \cup \HH_4$ and, $
\displaystyle \Delta := \displaystyle \prod_{\substack{h_i, h_j \in \HH \\
h_i \neq h_j}} |h_j - h_i|$.\\ Choose $ \displaystyle U := C k^2 \log (h)$
so that $\displaystyle \log \Delta \leq U$.

\begin{lemma} \label{GLemma}
Let $\beta_i = \text{max}(-\Re(s_i), 0)$ and assume $\beta_i < 1/4 - \xi$ for all $0 < i \leq 4$ for some $\xi >
0$. Then, there exists a constant $C$ such that:
$$G(s_1, s_2, s_3, s_4) \ll_{M, \xi} \exp (C M U ^{\Sigma \beta_i}  \log \log
U). $$
\end{lemma}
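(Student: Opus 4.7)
The plan is to realize $G(s_1,\dots,s_4)$ as an Euler product $\prod_p G_p$ and bound $\log|G| = \sum_p \log|G_p|$ by splitting the primes into a small and a large range. The local factor has the explicit form
\begin{equation*}
G_p = \Bigl[\,1 + \sum_{\emptyset \ne T \subseteq \{1,2,3,4\}} (-1)^{|T|} E_T(p)\Bigr]\prod_{i=1}^{4}(1-p^{-s_i-1})^{-k_i}(1-p^{-s_1-s_2-1})^{r_1}(1-p^{-s_3-s_4-1})^{r_2},
\end{equation*}
and the target bound $\exp(CMU^{\sum\beta_i}\log\log U)$ will come from controlling these factors term by term.

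The key structural step is the range $p > h$, where the disjointness hypothesis $(\HH_1 \cup \HH_2) \cap (\HH_3 \cup \HH_4) = \emptyset$ is essential. For $p > h$, distinct elements of $\HH = \bigcup \HH_i$ occupy distinct residues modulo $p$, so disjointness forces every $|\Omega_T(p)|$ whose $T$ meets both halves $\{1,2\}$ and $\{3,4\}$ to vanish. The bracketed sum in $F_p$ therefore reduces to the product of the two-weight local factors $1-\tfrac{k_1}{p^{s_1+1}}-\tfrac{k_2}{p^{s_2+1}}+\tfrac{r_1}{p^{s_1+s_2+1}}$ and $1-\tfrac{k_3}{p^{s_3+1}}-\tfrac{k_4}{p^{s_4+1}}+\tfrac{r_2}{p^{s_3+s_4+1}}$ up to an error of size $p^{-2}$, and Taylor-expanding the zeta local factors exhibits precise cancellation of the leading order-$1/p$ terms against these two blocks. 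Without disjointness, a mixed intersection $|\Omega_1(p) \cap \Omega_3(p)|$ would contribute at order $1/p$ with no matching zeta factor, breaking the argument.

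For the small primes $p \leq h$, where the cancellation above is unavailable, I fall back on the trivial estimates $|\Omega_T(p)| \leq \min(p, \min_{t \in T} k_t)$ and $|1 - p^{-s_i-1}|^{-k_i} \leq (1 - p^{-1+\beta_i})^{-k_i}$ to bound $|\log G_p| \ll_M p^{-1+\sum \beta_i} + (\log p)/p$. Partial summation against the prime counting function, together with Mertens' estimate $\sum_{p \leq x} 1/p = \log\log x + O(1)$ and the definition $U = Ck^2 \log h \geq \log \Delta$, then yields a bound of the required shape $CMU^{\sum\beta_i}\log\log U$; the factor $M$ absorbs the $16$ subsets $T$ and the combinatorial constants from expanding the logarithm. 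Exponentiating combines this with the $O_{M,\xi}(1)$ tail from $p > h$ to give the claimed bound.

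The main obstacle is making the cancellation at $p > h$ quantitative: one must expand the bracketed sum in $F_p$ and the zeta local factors to sufficient order and verify they agree through terms of size $p^{-1-\min_i \beta_i}$, so that the remainder is summable. This is the four-weight analogue of \emph{Lemma 2} of \cite{goldston-2005}, and the hypothesis $\beta_i < 1/4 - \xi$ ensures the remaining tail still converges after cancellation; once this is in place, the Mertens-type computation for the small primes is routine.
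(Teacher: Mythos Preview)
Your structural outline is right --- Euler product, cancellation in the local factor for primes where all residues are distinct, trivial bounds elsewhere --- but the two-range split at $p = h$ is too coarse to give the stated bound. Summing your small-prime estimate $|\log G_p| \ll_M p^{-1+\sum\beta_i}$ over $p \leq h$ gives
\[
\sum_{p \leq h} p^{-1+\sum\beta_i} \leq h^{\sum\beta_i}\sum_{p\leq h}\frac{1}{p} \ll h^{\sum\beta_i}\log\log h,
\]
not $U^{\sum\beta_i}\log\log U$. Since $U \asymp k^2\log h$, these differ essentially by an exponential, and the resulting bound on $G$ is far too large to feed into the contour-shifting argument of Lemma~\ref{GPYLemma}. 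Invoking ``the definition $U = Ck^2\log h \geq \log\Delta$'' does not repair this: that inequality tells you nothing about a sum that already ranges over all $p \leq h$.

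The fix, as in the paper, is to cut at $U$ rather than $h$ and then split the tail $p > U$ into two pieces. For primes $p > U$ with $p \nmid \Delta$, your cancellation argument applies verbatim: distinct elements of $\HH$ occupy distinct residues whenever $p \nmid \Delta$, not only when $p > h$, so the mixed intersections vanish and the local factor matches the zeta factors up to $O(p^{-2+4\max\beta_i})$. For primes $p > U$ with $p \mid \Delta$ there is no cancellation, but there are at most $\log\Delta \leq U$ of them, each at least $U$, so their trivial contribution is $\ll \max(k_i)\sum_{U < n \leq 2U} n^{-1+\sum\beta_i} \ll \max(k_i)\,U^{\sum\beta_i}$. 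With this three-way split the genuinely ``small'' range $p \leq U$ produces the claimed $U^{\sum\beta_i}\log\log U$, and neither of the other two ranges exceeds it.
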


\begin{proof} We will divide the total product into three separate
products and use the Euler product expansion of the zeta
function to bound each part of the product. \\

\subsubsection{The Product of Primes Under $U$}

We can divide the product into three parts, one which corresponds to terms of the $F$ function, one which
corresponds to the zeta functions in the numerator of $G$ and a third which corresponds to the zeta functions in
its denominator.

Bounding the product in the $F$ function for a fixed $T$ by standard means:

$ \displaystyle
 \prod_{p \leq U} \left(1 + \left| E_T(p)\right| \right)  \label{underu1}
\leq \prod_{p \leq U} \left(1 +  { \max(k_i) \over p^{1 - \Sigma
\beta_i}} \right)
\leq \exp\left( \sum_{p \leq U}{\max(k_i) \over p^{1- \Sigma \beta_i}} \right)$

$\hfill \leq \exp\left( \max(k_i) U^{\Sigma \beta_i} \displaystyle \sum_{p \leq
U}{1\over p} \right) \ll \exp(\max(k_i) U ^ {\Sigma{\beta_i}} \log \log U)$.

Since we can bound the total $F$ function by a fixed number of these
products, this total portion of the product is $\ll \exp(C \max(k_i) U ^
{\Sigma{\beta_i}} \log \log U)$.

Similarly to above, we invoke the results of \cite{goldston-2005} for
bounding the $\zeta$ functions in the numerator:
\begin{align*}
\prod_{p \leq U} \left| 1 - {1 \over p^{s_i + 1}} \right|^{-k_i} \ll
\exp(3 k_i U^{\beta_i} \log \log U).
\end{align*}

And, since $r_i$ is bound above by $\max_i(k_i)$ we can bound the $\zeta$
functions in the denominator as in \cite{goldston-2005},
\begin{align*}
\prod_{p \leq U} \left| 1 - {1 \over p^{s_i + s_j + 1}} \right|^{r_i}
&\leq \left( \prod_{p \leq U} \left( 1 + {1 \over p^{1-\Sigma \beta_i}}
\right) ^{-1} \right)^{r_i} \\
&\ll \exp( \max(k_i) U^{\Sigma \beta_i} \log \log U).
\end{align*}
Therefore, the final product for primes less than $U$ is
$$ \ll \exp(C
\max(k_i) U ^{\Sigma \beta_i} \log \log U).$$

\subsubsection{Primes above U which Divide $\Delta$}

For this, notice that,
\begin{align}
\prod_{\substack{p | \Delta \\ p > U}} \left(1 + {\max(k_i) \over p^{1 -
\Sigma \beta_i}} \right) \leq \exp \left( \displaystyle \sum_{\substack{p
| \Delta \\ p > U}} {\max{k_i} \over p^{1-\Sigma \beta_i}} \right).
\label{dividedelta1}
\end{align}

Similarly to the analysis in \cite{goldston-2005}, there are less than $(1
+ o(1))\log \Delta < U$ primes such that $p | \Delta$. We can therefore
replace the sum above with the first $U$ numbers greater than $U$.
Therefore,
\begin{equation*}
(\ref{dividedelta1}) \leq \exp \left( \max(k_i) \sum_{U < n \leq 2U}{1
\over n^{1 - \Sigma \beta_i}}\right) \leq \exp(C \max(k_i) U^{\Sigma
\beta_i}).
\end{equation*}

And each of the factors can be bound identically. \subsubsection{Primes
above $U$ which do not divide $\Delta$}

At this point the choice of the zeta functions and our restriction on the
intersection of the sets becomes important. By their selection, the size
of the intersections of the $\Omega_i(p)$ functions are exactly known in
this category. If two of the values occupied the same residue class modulo
a given prime, then their difference would have been a factor in computing
$\Delta$ and therefore, the prime would have divided $\Delta$, and
therefore is not in this product. So for all $p$ such that $p \nmid
\Delta$,
\begin{equation*} |\Omega_i| = k_i, \; |\Omega_1 \cap \Omega_2| = r_1 , \;|\Omega_3
\cap \Omega_4| = r_2
\end{equation*}
and all other combinations are empty by our initial assumption that
\begin{equation*} (\HH_1 \cup \HH_2) \cap (\HH_3 \cup \HH_4) = \emptyset.
\end{equation*}
In this case, we note the following about the product of terms belonging
to the $F$ function. Assuming $U$ is bigger than a certain threshold
depending only on $\max(k_i)$ and $\xi$, \\

$\noindent \displaystyle \prod_{\substack{p \nmid \Delta \\ p > U}} [1 + \hspace{-.1in}\sum_{\substack{T \subset \{1,2,3,4\} \\
T \neq \emptyset}} \hspace{-.2in}(-1)^{|T|} E_T(p) ] =$

$\noindent  \displaystyle \prod_{\substack{p \nmid \Delta \\ p > U}}
\left( {1 - {k_1 \over p^{1 + s_1}} - {k_2 \over p^{1 + s_2}} + {r_1 \over
p^{1 + s_1 + s_2}} - {k_3 \over p^{1 + s_3}} - {k_4 \over p^{1 + s_4}} +
{r_2 \over p^{1 + s_3 + s_4}}} \right) \leq $

 $\noindent  \displaystyle \prod_{\substack{p \nmid \Delta \\
p > U}} \left| \left(1- {k_1 \over p^{1+s_1}}\right) \left(1- {k_2 \over
p^{1+s_2}}\right) \left(1+ {r_1 \over p^{1+s_1 + s_2}}\right) \left(1-
{k_3 \over p^{1+s_3}}\right) \right. \times $ \vspace{-.1in}

$\hfill \displaystyle \left. \left(1- {k_4 \over p^{1+s_4}}\right)
\left(1+ {r_2 \over p^{1+s_3 + s_4}}\right) \left( 1 + {C\max(k_i) \over
p^{ 2 - 4 \max {\beta_i}}} \right) \right| \ll_{M,\xi} $ \vspace{.1in}

$\noindent\displaystyle\prod_{\substack{p \nmid \Delta \\ p > U}} \left|
\left(1- {k_1 \over p^{1+s_1}}\right) \left(1- {k_2 \over
p^{1+s_2}}\right) \left(1+ {r_1 \over p^{1+s_1 + s_2}}\right) \left(1-
{k_3 \over p^{1+s_3}}\right) \times \right.$ \\ \vspace{-.1in}

$\hfill \displaystyle \left. \left(1- {k_4 \over p^{1+s_4}}\right)
\left(1+ {r_2 \over p^{1+s_3 + s_4}}\right) \right|. $ \vspace{.1in}

Where the final inequality follows from the fact that $\beta_i < 1/4 -
\xi$. Now each of these factors can be paired with its corresponding zeta
functions. There are two distinct such pairs, and only two lines of
analysis are necessary. Using the following results from
\cite{goldston-2005}:

\begin{align*}
\prod_{\substack{p \nmid \Delta \\ p > U}} \left|  \left(1- {k_i \over
p^{1+s_i}}\right)\left(1- {1 \over p^{1+s_i}}\right)^{-k_i} \right| \leq
\exp(2k_iU^{\beta_i}), \end{align*} \begin{align*}
 \prod_{\substack{p \nmid \Delta \\ p > U}} \left| \left(1+ {r_i \over p^{1+s_i+s_j}}\right)\left(1- {1 \over
p^{1+s_i+s_j}}\right)^{r_i} \right| = \prod_{\substack{p \nmid \Delta \\ p
> U}} \left( 1 + O_M\left({1 \over p^{2 - 2s_i - 2s_j}} \right) \right)
\end{align*}

\noindent where the product on the right hand side of the second inequality is convergent depending only on
$\xi$ due to our assumption that $\beta_i < 1/4 - \xi$. Therefore, the entire product for primes in this
category is $\ll \exp(C \max(k_i) U^{\Sigma \beta_i})$. Therefore, combining the results of all three sections,
the total product is,
\begin{equation*}
\ll_{M, \xi} \exp(C M  U ^{\Sigma \beta_i} \log \log U)
\end{equation*}

\noindent for some $C > 0$ which does not depend on any $k_i$ value. This growth condition is necessary to
invoke the result of \cite{goldston-2005}. It should also be noted that as in previous works, $G$ is analytic in
the region described in this lemma as this property will be needed later (this follows from the definition of
$G$ and the bound we just exhibited). We now give a few results on the zeta function cited in
\cite{goldston-2005} before stating the necessary lemma.
\end{proof}

\subsection{Some Facts on the Zero Free Region of $\zeta$}

First, there is a small constant $\bar{c}\leq 10^{-2}$ such that
$\zeta(\sigma + it) \neq 0$ in the region,

$$ \sigma \geq 1 - {4 \bar{c} \over \log(|t| + 3)}. \;\; \text{ Furthermore, in this region: }$$

$$\zeta(\sigma + it) - {1 \over \sigma - 1 + it} \ll \log(|t| + 3),
\; \text{ and } \;\; {1 \over \zeta(\sigma + it)} \ll \log(|t|-3).$$

\subsection{A Necessary Lemma} \label{GPYSection}

In the analysis presented in this paper, only a weaker version of the
lemma in \cite{goldston-2005} is needed. The version needed is stated
below. Let,
$$\displaystyle T_R^*(a,b,d,u,v,h) := { 1 \over (2 \pi i)^2}
\int_{(1)} \int_{(1)} {D(s_1, s_2)R^{s_1 + s_2} \over s_1^{u+1} s_2^{v+1}
(s_1 + s_2)^d} ds_1 ds_2$$ \noindent where,
$$D(s_1, s_2) := {G(s_1, s_2)W^d(s_1 + s_2) \over W^a(s_1)
W^b(s_2)} \text{ and } W(s) := s \zeta ( 1+s).$$

Assume $G(s_1, s_2)$ is regular on and to the right of the line:
\begin{equation}
s = -{\bar{c} \over \log(|t|+3)} + it
\end{equation}
and satisfies the bound:
$$G(s_1, s_2) \ll_M \exp(CMU^{ \beta_1 + \beta_2} \log \log U),
\text{ where  }U = C M^2 \log (2h).$$
\begin{lemma} \label{GPYLemma} Suppose that
$$0 \leq a, b, d, u, v \leq M, \text{ } a + u \geq 1, \text{ } b + v
\geq 1, \text{ } d \leq \min(a, b)$$

\noindent where $M$ is any large constant. Let $h \ll R^C$ for any $C >
0$. Then, as $R \to \infty$,

$\displaystyle T_R^*(a, b, d, u, v, h) = {u + v \choose u} { (\log R)^{u + v + d}
\over (u + v + d)!} G (0, 0) +$

$\hfill \displaystyle O_M((\log R)^{u+v+d-1} (\log \log
R)^{C_M}).$

\end{lemma}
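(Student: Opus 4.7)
The plan is to evaluate the double contour integral by shifting both contours past $\Re(s_i) = 0$ and collecting residues, following the template of Lemma~3 of \cite{goldston-2005}. The essential observation is that $W(s) = s\zeta(1+s)$ is entire with $W(0) = 1$ (since $\zeta(1+s) = 1/s + \gamma + O(s)$), so the factors $W^{-a}(s_1)$, $W^{-b}(s_2)$, and $W^{d}(s_1+s_2)$ are all holomorphic near $(0,0)$, and the only singularities of the integrand in a neighborhood of the origin come from the explicit poles $s_1^{-u-1}$, $s_2^{-v-1}$, and $(s_1+s_2)^{-d}$. In particular $D(0,0) = G(0,0)$.

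First, with $s_2$ held on $\Re(s_2) = 1$, shift the $s_1$-contour to $\mathcal{C}_1 : \Re(s_1) = -\bar{c}/\log(|\Im(s_1)|+3)$; the only pole crossed is $s_1 = 0$, since $(s_1+s_2)^{-d}$ has its pole at $\Re(s_1) = -1$, well outside the shift region, and $G$ is regular on and to the right of $\mathcal{C}_1$ by hypothesis. Then repeat the procedure for $s_2$. The double residue at the origin produces the main term; to extract its leading part cleanly, split $D(s_1,s_2) = G(0,0) + (D(s_1,s_2) - G(0,0))$ and apply the template identity
\[
\frac{1}{(2\pi i)^2}\int_{(1)}\int_{(1)} \frac{R^{s_1+s_2}}{s_1^{u+1}s_2^{v+1}(s_1+s_2)^{d}}\,ds_1\,ds_2 \;=\; \binom{u+v}{u}\frac{(\log R)^{u+v+d}}{(u+v+d)!},
\]
which can be verified by a direct residue computation (or by rescaling $s_i \mapsto s_i/\log R$ and invoking a beta-function identity).

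The error terms arise from two sources. First, the remaining integrals along the shifted contours are bounded by combining the standard estimate $\zeta(1+s) \ll \log(|t|+3)$ in the zero-free region with the hypothesized growth $G \ll_M \exp(CMU^{\beta_1+\beta_2}\log\log U)$ and the decay $|R^{s}| = R^{\Re(s)}$ on $\mathcal{C}$ (where $\Re(s) < 0$); since $U \ll \log R$, the factor from $G$ is absorbed into a power of $\log\log R$. Second, the Taylor remainder $D(s_1,s_2)-G(0,0)$ contributes residues each of which kills at least one power of $\log R$ relative to the main term. Packaged together, these give the stated error $O_M((\log R)^{u+v+d-1}(\log\log R)^{C_M})$.

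The main obstacle is the simultaneous bookkeeping: the coupling $(s_1+s_2)^{-d}$ means the residues at $s_1=0$ and $s_2=0$ are entangled and must be expanded with care (this is where the binomial coefficient $\binom{u+v}{u}$ enters the main term), and in the error integrals one must balance the sub-polynomial growth of $G$ supplied by Lemma~\ref{GLemma} against the polynomial decay provided by $|s_i|^{-u-1}$, $|s_j|^{-v-1}$, $|s_1+s_2|^{-d}$ and the exponential smallness of $R^{\Re(s)}$ on $\mathcal{C}$. Since the present statement is a strict weakening of the corresponding estimate in \cite{goldston-2005}, the underlying contour-shifting arguments transfer essentially unchanged; the task here is primarily to verify that the growth hypothesis on $G$ established in Lemma~\ref{GLemma} is compatible with the error analysis.
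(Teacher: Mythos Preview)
The paper does not give its own proof of this lemma: it is stated explicitly as ``a weaker version of the lemma in \cite{goldston-2005}'' (their Lemma~3) and is simply quoted as a known input. So there is nothing in the paper to compare against beyond the citation.

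Your sketch is a faithful outline of the GPY contour-shifting argument that underlies the cited result: shift each variable past $\Re(s_i)=0$ into the zero-free region, pick up the residues at $s_i=0$, isolate the leading contribution via the template integral, and bound the shifted integrals using the zero-free region estimates together with the assumed growth on $G$. That is exactly the mechanism in \cite{goldston-2005}, and your identification of where the binomial coefficient and the $(\log R)^{u+v+d}$ arise is correct. One small point of care: in the actual GPY argument the first shift is not all the way to $\Re(s_2)=1$ versus $\Re(s_1)$ negative; both lines are first moved to $\Re(s_i)\asymp 1/\log R$ so that the coupling pole $(s_1+s_2)^{-d}$ and the $W$-factors stay under control during each single-variable shift, and the residue/error bookkeeping is done from there. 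Your description elides this, but it does not affect the validity of the outline.
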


\subsection{Splitting the four integrals} \label{SplittingSection}

The goal in this section will be to split the four integrals in the
expression for $T$ into two pairs of two integrals and use {\it Lemma
\ref{GPYLemma}} twice. We will use {\it Lemma \ref{GLemma}} to show that
each of the pairs of integrals is acceptable to use with {\it Lemma
\ref{GPYLemma}}. Let $k_i + l_i +1 = u_i$ and introduce the following
notation (let $s_j = \sigma_j + i t_j$):
$$  {{ \zeta(s_1 + s_2 +
1) ^ {r_1}}  \over { \zeta(s_1 +1) ^ {k_1} \zeta(s_2+1) ^ {k_2}}}
 = \zeta_1(s_1, s_2) \;\;\;\; , \;\;\;\; {{ \zeta(s_3 + s_4 + 1) ^ {r_2}}  \over { \zeta(s_3
+1) ^ {k_3} \zeta(s_4+1) ^ {k_4}} } = \zeta_2 (s_3, s_4).$$ This allows a
simplification of the expression for $T$ as (let $\overline{d} = ds_1 ds_2 ds_3 ds_4$):
\begin{align*}
T &=& \int_{(1)} \int_{(1)} \int_{(1)} \int_{(1)} G(s_1, s_2, s_3, s_4)
\zeta_1(s_1, s_2) \zeta_2(s_3, s_4) {R^{s_1 + s_2 + s_3 + s_4} \over
s_1^{u_1} s_2 ^ {u_2} s_3 ^ {u_3} s_4 ^ {u_4}} \overline{d} .
\end{align*}
The integrand above in $s_3$ is analytic to the right of the line $\Re(z)
= 0$ as long as $\Re(s_i) > 0$ for all other $s_i$ and the same holds true
for $s_4$. Checking {\it Lemma \ref{GLemma}}, one sees that the integrand
in $T$ vanishes as either $|t_3| \to \infty$ or $|t_4| \to \infty$. One
can therefore shift the integral over both variables to the line $L$ which
is the vertical line which passes through $1 / \log(N)$. Therefore, with a
quick substitution,

$$T ={1 \over (2 \pi i)^2} \int_{(L)} \int_{(L)} Q(s_3, s_4) \zeta_2(s_3, s_4) {R^{s_3 + s_4} \over
s_3^{u_3} s_4^{u_4}} ds_3 ds_4 \;\;\;\;\;\;\;\;\;\; \text{where, }$$

$$Q(s_3, s_4) := {1 \over (2 \pi i)^2}\int_{(1)} \int_{(1)} G(s_1, s_2, s_3,
s_4) \zeta_1(s_1, s_2) {R^{s_1 + s_2} \over s_1^{u_1} s_2 ^ {u_2} } ds_1
ds_2.$$

\subsection{Applying the Lemma}

By {\it Lemma \ref{GLemma} }when $s_3, s_4$ are fixed on the line
$\Re(s_3) = \Re(s_4) = 1/\log(N)$ (Which implies $\beta_3 = \beta_4 = 0$)
and taking $\xi$ as a small enough absolute constant depending only on the
absolute constant $\bar{c}$ to ensure the region $\beta_i \leq 1/4 - \xi$
includes the zero free region described previously,
$$G(s_1, s_2, s_3, s_4) \ll_M \exp(C M \log U ^{\beta_1 + \beta_2} \log \log
U)$$ \noindent to the right of the line described before {\it Lemma
\ref{GPYLemma}}. Therefore the $G$ function is acceptable to use {\it
Lemma \ref{GPYLemma}} where $s_3$ and $s_4$ are fixed and positive and the
integral is being evaluated over the $s_1$ and $s_2$ variables. Using the
substitution with the $W$ function defined identically as in {\it Lemma
\ref{GPYLemma}},

\begin{equation*}
 D_{s_3, s_4}(s_1, s_2) = {G(s_1,s_2,s_3,s_4) W^{r_1}(s_1, s_2)
\over W^{k_1}(s_1)W^{k_2}(s_2)},
\end{equation*}

\begin{equation*}
Q(s_3, s_4) = {1 \over (2 \pi i)^2} \int_{(1)} \int_{(1)} D_{s_3,
s_4}(s_1, s_2) {R^{s_1 + s_2} \over s_1^{l_1+1} s_2 ^ {l_2+1}(s_1 +
s_2)^{r_1} } ds_1 ds_2.
\end{equation*}

Because the analyticity of $G$ in the $s_1$ and $s_2$ variables is
maintained when $s_3$ and $s_4$ are fixed and positive, the lemma can be
applied by letting $a = k_1$, $b = k_2$, $u = l_1$, $v = l_2$ and $d =
r_1$. As long as $k_1 + l_ 1 \geq 1$ and $k_2 + l_2 \geq 1$ (for the rest
of the section, assume that each $k_i$ is positive, implying the previous
inequality. The case when at least one of the $k_i = 0$ will be addressed
separately at the end of the section). {\it Lemma \ref{GPYLemma}} implies,
as long as $\Re(s_3), \Re(s_4) \geq 0$, \vspace{.2in}

$\displaystyle Q(s_3, s_4) = {l_1 + l_2 \choose l_1}{(\log R)^{l_1 + l_2 + r_1}
\over {(l_1 + l_2 + r_1)}!}G(0, 0, s_3, s_4) + $

$\hfill \displaystyle O_M \left( (\log N)^{l_1 +
l_2 + r_1 -1} (\log \log  N) ^ {C_M} \right)$.

 This implies, \\

$\displaystyle T = {l_1 + l_2 \choose l_1}{(\log R)^{l_1 + l_2 + r_1}
\over {(l_1 + l_2 + r_1)}!}\int_{(L)} \int_{(L)} G(0, 0, s_3, s_4)
\zeta_2(s_3, s_4) {R^{s_3 + s_4} \over s_3^{u_3} s_4^{u_4}} ds_3 ds_4
$

$\displaystyle \hfill + O  \left( (\log N)^{l_1 + l_2 + r_1 -1} (\log \log
N) \right) \int_{(L)} \int_{(L)} \left| \zeta_2(s_3, s_4) {R^{s_3 + s_4}
\over s_3^{u_3} s_4^{u_4}} \right|ds_3 ds_4. $\\

Since the first integrand vanishes as $t_1 \to \infty$ or $t_2 \to \infty$, one can shift the lines of
integration of the first line above back to the line $\Re(z) = 1$, there are therefore two values left to
evaluate:
\begin{align*} T_1 = \int_{(1)} \int_{(1)} G(0, 0, s_3, s_4) \zeta_2(s_3, s_4)
{R^{s_3 + s_4} \over s_3^{u_3} s_4^{u_4}} ds_3 ds_4 \;\;\; \text{ and, }
\;\;\;
\end{align*}

$$T_2 = \int_{(L)} \int_{(L)} \left| \zeta_2(s_3, s_4) {R^{s_3 + s_4} \over
s_3^{u_3} s_4^{u_4}}\right| ds_3 ds_4  $$

\subsection{Evaluating $T_1$, the main term and some error} \label{T1sub}

Since the roles of $(s_1, s_2)$ and $(s_3, s_4)$ are symmetric in {\it
Lemma \ref{GLemma}} (when $s_1= s_2 = 0$, $\beta_1 = \beta_2 = 0$), one
can switch the roles of $s_1, s_2$ with $s_3, s_4$ and reapply {\it Lemma
\ref{GPYLemma}} to get,

$\displaystyle T_1 = {l_3 + l_4 \choose l_3}{(\log R)^{l_3 + l_4 + r_2} \over {(l_3 +
l_4 + r_2)}!}G(0, 0, 0, 0) +$

$\hfill O_M\left( (\log N)^{l_3 + l_3 + r_2 -1} (\log
\log N)^{C_M} \right).$

\subsection{Evaluating $T_2$, the second error term} \label{T2sub}
\label{seconderror}

Fix any absolute constant $\gamma \in (0, 1)$. Noticing that $R^{s_3 + s_4}$ is absolutely bounded on the line
$1/\log N$:
\begin{align*}
T_2 &\ll \int_{(L)} \int_{(L)} \left| {\zeta(s_3+ s_4+1)^{r_2} \over
\zeta(s_3 + 1)^{k_3} \zeta(s_4 + 1)^{k_4}} {1 \over s_3^{k_3+l_3+ 1}
s_4^{k_4+l_4 + 1}}   \right| ds_3 ds_4.
\end{align*}

%\begin{align*}
%T_2 &\ll \int_{(L)} \int_{(L)} \left| {\zeta(s_3+ s_4+1)^{r_2} \over
%\zeta(s_3 + 1)^{k_3} \zeta(s_4 + 1)^{k_4}} {ds_3 ds_4 \over
%s_3^{k_3+l_3+ 1} s_4^{k_4+l_4 + 1}}   \right|\\
%&\ll\int_{(L)} \int_{(L)} \left| {1 \over \zeta(s_3 + 1)^{k_3} s_3^{k_3}}
%\right| \left|{1 \over \zeta(s_4 + 1)^{k_4} s_4^{k_4}}
%\right| \left| \zeta(s_3 + s_4 + 1)^{r_2} \right| \left|{ 1 \over s_3^{l_3+ 1} s_4^{l_4 + 1}} \right| ds_3 ds_4 \\
%&\ll \int_{(L)} \int_{(L)} \log(|t_1| +  3)^{k_3} \log(|t_2| + 3)^{k_4}
%\log(|t_2 + t_3| + 3)^{r_2} \max\left( 1, {1 \over s_1 + s_2 }
%\right)^{r_2} { ds_3 ds_4  \over \left|s_3^{l_3+ {1 + \gamma}} s_4^{l_4 + {1 + \gamma}}\right|} \\
%\end{align*}

Now observe that with $k_3 \geq 1$, when $\Re(s_3) \geq 0$ and $|s_3| \leq
1$,

$$\left| {1 \over \zeta(s_3 + 1)^{k_3} s_3^{k_3}} \right| \ll_M 1 \ll_M { 1 \over
  \left| {s_3}^\gamma \right|},$$

\noindent and when $|s_3|
> 1$, there is the general inequality that follows from the growth
conditions enumerated in {\it Section \ref{ZetaSection}} (since if
$\Re(s_3) \geq 0$, $s_3+1$ trivially falls in the region described),

$$\left| {1 \over \zeta(s_3 + 1)^{k_3} s_3^{k_3}} \right| \ll_M \left| {1 \over
    \zeta(s_3 + 1)^{k_3} {s_3}} \right| \ll_M {\log(|t_3| + 3)^{k_3}
  \over \left| {s_3} ^ {\gamma} \right|},$$

\noindent and finally, the inequality,
$$\left| \zeta(s_3 + s_4 + 1)^{r_2} \right| \ll_M  \log(|t_3 + t_4| + 3)^{r_2} \max\left( 1, {1 \over |s_3 + s_4 |}
\right)^{r_2}.$$

Substituting $\omega_3 = x_3 + i y_3 = s_3 \log(N)$ and letting $\overline{d} = d s_3 d s_4, \hat{d} = d \omega_3 \omega_4$,\\

$\displaystyle T_2 \ll_M \int_{(L)} \int_{(L)} \hspace{-.1in}{\log(|t_3| + 3)^{k_3} \log(|t_4| + 3)^{k_4} \log(|t_3 + t_4|
+ 3)^{r_2}  \over \left|s_3^{l_3+ 1 + \gamma} s_4^{l_4 + 1 + {\gamma}}\right|} \times $\\

$\hfill \displaystyle \max\left( 1,
{1 \over |s_3 + s_4 |} \right)^{r_2}  \overline{d} \ll_M $\vspace{.1in} \\

$\displaystyle \log(N)^{r_2} \int_{(L)} \int_{(L)}\hspace{-.1in} {\log(|t_3| +  3)^{k_3} \log(|t_4| +
3)^{k_4} \log(|t_3 + t_4| + 3)^{r_2} \over
  \left|s_3^{l_3+ 1 + \gamma} s_4^{l_4 + 1 + \gamma}\right|} \overline{d}
  \ll_M$\vspace{.1in} \\

$\displaystyle \hspace{-.15in} \log(N)^{l_3 + l_4 + r_2 + 2 \gamma } \times$

$\displaystyle \hfill \int_{(1)} \int_{(1)}
{\log(|{y_3 \over \log(N)}| +  3)^{k_3} \log(|{y_4 \over \log(N)}| + 3)^{k_4} \log(|{y_3 + y_4 \over \log(N)}| +
3)^{r_2} \over {|\omega_3 ^{l_3 + 1 + \gamma} \omega_4^{l_4 + 1 +
\gamma}}|} \hat{d} \ll_{M,\gamma} \vspace{.1in} $\\

$$\displaystyle \log(N)^{l_3 + l_4 + r_2 + 2 \gamma}$$
%\begin{align*}
%T_2 &\ll& \int_{(L)} \int_{(L)} {\log(|t_1| +  3)^{k_3} \log(|t_2| +
%3)^{k_4} \log(|t_2 + t_3| + 3)^{r_2}  \over \left|s_3^{l_3+ 1 + \gamma}
%s_4^{l_4 + 1 + {\gamma}}\right|} \max\left( 1, {1 \over s_1 + s_2 }
%\right)^{r_2}  ds_3 ds_4  \\
%&\ll& \log(N)^{r_2} \int_{(L)} \int_{(L)} \log(|t_3| +  3)^{k_3}
%\log(|t_4| + 3)^{k_4} \log(|t_3 + t_4| + 3)^{r_2} { ds_3 ds_4  \over
%  \left|s_3^{l_3+ 1 + \gamma} s_4^{l_4 + 1 + \gamma}\right|} \\
%&\ll& \log(N)^{l_3 + l_4 + r_2 + 2 \gamma } \int_{(1)} \int_{(1)}
%{\log(|{y_3 \over \log(N)}| +  3)^{k_3} \log(|{y_4 \over \log(N)}| +
%3)^{k_4} \log(|{y_3 + y_4 \over \log(N)}| +  3)^{r_2} \over {|\omega_3
%^{l_3
%+ 1 + \gamma} \omega_4^{l_4 + 1 + \gamma}}|} d \omega_3 d \omega_4 \\
%&\ll_{\gamma}& \log(N)^{l_3 + l_4 + r_2 + 2 \gamma}
%\end{align*}
\noindent because the final integral is absolutely convergent since $l_i
\geq 0$ and $\gamma > 0$.

\subsection{Combining the results}

This yields the final derivation that, using the fact that $G(0,0,0,0) =
\gs (\HH_1 \cup \HH_2 \cup \HH_3 \cup \HH_4)$ and labeling $l_1 + l_2 +
r_1 = u$ and $l_3 + l_4 + r_2 = v$:

$\displaystyle T ={l_1 + l_2 \choose l_1}{l_3 + l_4 \choose l_3} {(\log R)^{u + v} \over {u!v}!} \gs(\HH_1 \cup \HH_2 \cup \HH_3 \cup \HH_4) +$

$\displaystyle \hfill O_{M,\gamma} \left( (\log N)^{u +
v - 1 + 3\gamma} \right)$

\noindent Since we can pick $\gamma$ as any positive value, this implies
the theorem when combined with the additional error term from {\it Section
\ref{TheFirstError}}. Now, let us address the case where some $k_i = 0$.
If two $k_i$ values are zero, the theorem is implied by the result from
{\it Proposition 1} from \cite{goldston-2005} because there are only two
remaining weight functions. The remaining case is when only one $k_i$
value is zero. In this case, instead of $4$ integrals, there are only
three remaining. The analysis up to {\it Section \ref{T1sub}} is
identical, with the only change being that there are three integrals
instead of four. At {\it Section \ref{T1sub}} instead of invoking {\it
Lemma \ref{GLemma}}, one would use the analysis of {\it Proposition 1
(Special Case)} of \cite{goldston-2005}, which is the equivalent statement
of {\it Proposition 1} with only one weight function instead of two (it is
only explicitly shown for $l=0$ but as is mentioned in {\it Section 6} of
\cite{goldston-2005} the analysis generalizes to all $l \geq 0$). The
corresponding analysis of {\it Section \ref{T2sub}} follows identically
with one integral instead of two.

\section{Proof of Theorem 2, Gallagher Extension} \label{GallagherSection}

In this section, we will show that Kevin Ford's \cite{ford-2007}
simplification of P. X. Gallagher's proof can be extended to give an
estimate for a more involved sum over singular series which is useful is
applying {\it Theorem 1}. The importance of the theorem is that it allows
the two pairs of weights we consider to vary over different intervals.
\begin{thm2}

Take an interval $[0, h]$ and take $l$ subintervals $[B_i(h), C_i(h)] \subset [0, h]$ where $C_i(h) - B_i(h) =
d_i (h)$. Assume for some $1>\delta > 0$, and for all $i \leq l$, $h^\delta = o(d_i (h))$. Then,
\begin{equation*} \label{originaleq}
 \sum_{\substack{A_1, A_2, \ldots A_l: \\ A_i \subset [B_i(h), C_i(h)], |A_i| = k_i}} \gs \left( \bigcup_{i=1}^l A_i \right) = \prod_{i = 1}^l {d_i^{\;
k_i}  (h) \over k_i !} \left( 1 + O_{r, \delta}\left({1 \over \log \log h } \right)
\right).
\end{equation*}
\end{thm2}

First, recall the definition of the singular series:
$$ \gs( \HH ) = \prod_{p} \left( 1 -
{|\Omega_\HH(p)| \over p} \right) \left(1 - { 1 \over p} \right) ^{-
|\HH|}.$$

 Similarly to the proof of {\it Lemma \ref{GLemma}} define, with $\HH = \cup
 A_i$,
$$\Delta := \prod_{h \neq h' \in \mathcal{H}} |h - h'| \text{ , }y
:= (\delta / 2) \log h \text{ and } r:= \sum_{0 < i \leq l} k_i. $$

The first statement to show, will be that it suffices to consider all $A_i$ such that $i \neq j \Rightarrow A_i
\cap A_j = \emptyset$ because the number of such sets vastly exceed all others. The singular series itself can
be bound above without much trouble, notice that the product for all $p > h$ is $\ll_r 1$ since the size of
$|\HH|$ and $|\Omega_{\HH}(p)|$ will both be equal. As for the product of primes under $h$, we can bound the
product as $\ll \log^r h$ by Mertens' Theorem. Since $h^{\delta}$ grows more slowly than any $d_i (h)$,
\begin{equation} \label{originaleq2} \hspace{-.3in}
 \sum_{\substack{A_1, A_2, \ldots A_l: \\ A_i \subset [B_i(h), C_i(h)], |A_i| = k_i}} \hspace{-.2in}\gs ( \bigcup_{i=1}^l A_i ) =
\hspace{-.2in} \sum_{\substack{A_1, A_2, \ldots A_l: \\ A_i \subset [B_i(h), C_i(h)]
 \\|A_i| = k_i, | \cup A_i | = r}} \hspace{-.1in} \gs ( \bigcup_{i = 1}^l A_i )
 + O_{r}(h^{-\delta} \log^r h\prod_{i=1}^l d_i^{k_i} (h) ).
\end{equation}

The sum on the right hand side is easier to evaluate because the exponent in the definition of the singular
series will now be a constant $-r$. Now, fix any $A_1, A_2, \ldots A_l$ which fall within the subintervals such
that $| \cup A_i | = r$, it is shown in \cite{ford-2007} that,

\begin{equation*}
\prod_{p > y} \left( 1 - {|\Omega_{\mathcal{H}}(p)| \over p} \right)
\left(1 - {1 \over p} \right)^{-r} = 1 + O_{r, \delta} \left( 1 \over \log
\log h \right).
\end{equation*}

As such, the sum on the right hand side of (\ref{originaleq2}) is equal to, \vspace{-.1in}

\begin{equation}\label{eqgal}\left(1 + O_{r,\delta} \left(1 \over \log \log h \right) \right) \prod_{p \leq
y} \left( 1 - {1 \over p}  \right) ^{-r} \hspace{-.1in}  \times \hspace{-.2in} \sum_{
\substack{A_1, A_2, \ldots A_l: \\ A_i \subset [B_i(h), C_i(h)]  \\|A_i| =
k_i, | \cup A_i | = r} } \prod_{p \leq y} \left( 1 - {
|\Omega_{\mathcal{H}}(p)| \over p } \right). \end{equation}

Let $ P = \prod_{p \leq y} p$ and note that $P = e^{y + o(y)} = h^{(\delta/2) + o(1)}$. The product on the far
right is $1/P$ times the number of $n$, $0 \leq n < P$ such that $ \left( \prod_{\alpha \in \mathcal{H}} (n+
\alpha), P \right) = 1$. We can also now eliminate the $|\cup A_i| = r$ condition with an error term $O_r(
h^{-\delta}\prod_i d_i^{k_i} (h))$, this leaves the factor on the right of (\ref{eqgal}) as, \vspace{-.15in}

\begin{align*} & \sum_{\substack{A_1, A_2, \ldots A_l: \\ A_i \subset [B_i(h), C_i(h)]
 \\|A_i| = k_i} } \hspace{-.15in}{ 1 \over P} \sum_{n=0}^{P-1}
\prod_{\alpha \in \HH} \sum_{e | (n+ \alpha, P)} \hspace{-.15in} \mu(e)  + O_r(h^{-\delta} \prod_{i=1}^l
d_i^{k_i} (h) )= \end{align*} \vspace{-.15in}
\begin{equation*} { 1 \over P} \sum_{n=0}^{P-1} \hspace{-.14in}\sum_{\substack{A_1, A_2, \ldots A_l: \\
A_i \subset [B_i(h), C_i(h)] \\ |A_i| = k_i} } \hspace{-.04in} \prod_{\alpha \in \HH} \sum_{e | (n+ \alpha, P)}
\hspace{-.14in} \mu(e) + O_r(h^{-\delta} \prod_{i=1}^l d_i^{k_i} (h) ). \end{equation*}

Let $Q(\alpha, e_i) = 1$ if $e_i | n + \alpha$ and $0$ otherwise. Then , letting $\HH' = \{ \alpha_1, \alpha_2,
\ldots \alpha_r \}$ where the first $k_1$ elements are in $[B_1(h), C_1(h)]$, the next $k_2$ are from $[B_2(h),
C_2(h)]$ and so on, with the last $k_l$ being from the interval $[B_l(h), C_l(h)]$,
$$ \sum_{\substack{A_1, A_2, \ldots A_l: \\
A_i \subset [B_i(h), C_i(h)]  \\|A_i| = k_i }} \prod_{ \alpha \in \HH} \sum_{e | (n+ \alpha, P)} \mu(e) = \left[
\prod_{i = 1} ^ l {1 \over k_i!} \right]  \sum_{\HH'} \prod_{ \alpha \in \HH'} \sum_{e | (n+ \alpha, P)} \mu(e)
=$$ $$ \left[ \prod_{i = 1}^l {1 \over k_i!} \right] \sum_{\HH'}  \sum_{e_1, e_2, \ldots, e_r | P} \mu(e_1)
\mu(e_2) \ldots \mu(e_r)
 \left[ \prod_{j = 1}^r Q(\alpha_j, e_j)  \right] $$

\noindent and therefore, the right side of (\ref{eqgal}) is, apart from the error term, equal to: \vspace{-.1in}

\begin{equation*}
{ 1 \over P} \sum_{n=0}^{P-1} \left[ \prod_{i = 1}^l {1 \over k_i!}
\right]  \sum_{e_1, e_2, \ldots, e_r | P} \mu(e_1) \mu(e_2) \ldots
\mu(e_r)
 \left[  \sum_{\HH'} \prod_{j = 1}^r Q(\alpha_j, e_j)  \right].
\end{equation*}

For a fixed $e_1, e_2, \ldots e_r$, $Q(\alpha_i, e_i)$ will be $1$ a total of $d_j (h) / e_i + O(1)$ times over all
choices of $a_i$ from $[B_j(h), C_j(h)]$ independent of all other $a_j$. By the definition, exactly $k_j$ of the
$a_i$ were chosen from $[B_j(h), C_j(h)]$. Since $P$ is an upper bound for each $e_i$ and $h^\delta$ grows much
slower than any $d_i(h)$, it follows that,
\begin{equation*}
\sum_{\HH'} \prod_{j = 1}^r Q(\alpha_j, e_j) = {d_1 ^ {k_1} (h) d_2 ^
{k_3} (h) \ldots d_l ^ {k_l} (h) \over e_1 e_2 \ldots e_r }\left( 1 +
O_{r}(h^{- \delta}{ P }) \right).
\end{equation*}

We have now eliminated the dependence on $n$ and therefore the $P$ and
$\sum_{n = 0} ^ {P-1}$ cancel out, leaving the right side of (\ref{eqgal})
as,

$\displaystyle
\left[ \prod_{i = 1}^l {d_i ^ {k_i} (h) \over k_i!} \right] \sum_{e_1, e_2, \ldots, e_r | P} { \mu(e_1) \mu(e_2)
\ldots \mu(e_r) \over e_1 e_2 \ldots e_r} \left( 1 +  O_{r}(h^{- \delta}{ P }) \right) + \vspace{-.2in}$

$\displaystyle \hfill  O_r(h^{-\delta}
\prod_{i=1}^l d_i^{k_i} (h) ).
$

Which, after substituting in the previous bound on the growth of $P$ in
terms of $h$, leaves the desired:

$ \displaystyle \left[ \prod_{i = 1}^l {d_i ^ {k_i} (h) \over k_i!} \right] \sum_{e_1,
e_2, \ldots, e_r | P} { \mu(e_1) \mu(e_2) \ldots \mu(e_r) \over e_1 e_2 \ldots e_r} \left( 1 + O_{r}(h^{-
\delta/2 + o(1)}) \right) +\vspace{-.2in}$

$\displaystyle \hfill O_r(h^{-\delta} \prod_{i=1}^l d_i^{k_i} (h) ) =
$ \begin{equation*} \left[ \prod_{i = 1}^l {d_i ^ {k_i} (h) \over k_i!}
\right] \prod_{p \leq y} \left( 1 - {1 \over p} \right) ^ r \left( 1 + O_{r}(h^{- \delta/2 + o(1)}) \right) +
O_r(h^{-\delta} \prod_{i=1}^l d_i^{k_i} (h) ).\end{equation*}

Multiplying the above with the left side of (\ref{eqgal}) and incorporating the error term from
(\ref{originaleq2}) yields the original sum as equal: \vspace{-.15in}

\begin{equation*}
\prod_{i = 1}^l {d_i^{\; k_i}  (h) \over k_i !} \left( 1 + O_{r, \delta}\left(1 \over \log \log h \right) \right) + \prod_{p \leq
y} \left( 1 - {1 \over p}  \right) ^{-r} \hspace{-.1in}O_{r, \delta}(h^{-\delta/2 + o(1)}\prod_{i=1}^l d_i^{k_i} (h)) .
\end{equation*}

Using Mertens' Theorem again to bound the product for $p \leq y$ on the right as $\ll_r \log^r y$ allows us to
incorporate the second error term with the first which implies the theorem.

\section{Application to $F_2$}

In this section, we will show an example application of the previous two theorems. We will use them to show an
improvement over the result for $F_2$ obtained in \cite{goldston-2005}.

\begin{thm3} There exists a $c > 0$ such that $F_2 \leq c < (\sqrt{2} - 1)^2$ . \end{thm3}

Following Goldston, Pintz and Y{\i}ld{\i}r{\i}m's lead, some sets of size
$k$ will be counted with multiplicity of $k!$ according to their
permutations. If a subset is meant to be taken with multiplicity in this
fashion, we will use the notation $\subset^*$. The way Goldston, Pintz and
Y{\i}ld{\i}r{\i}m proved their result for $\Delta_2$ is through the
following method. They showed that assuming $h > (\sqrt{2} - 1) ^2 \log
(N)$, the difference
\begin{equation} \label{GPYDif}
A( \bv ) := \sum_{n = N+1}^{2N} [ \sum_{1 \leq h_0 \leq h} \theta (n +
h_0) - 2 \log (3N) ] [ \hspace{-.2in}\sum _ {\substack{\HH \subset^* \{1
,2 \ldots , h \}
\\ { | \HH | = k}}}\hspace{-.2in} \Lambda _ R ( n ; \HH, k+l) ] ^2
\vspace{-.1in}
\end{equation}
\begin{align*}\text{where, }  R = N^{\Theta}, \; \; h = \lambda \log (3N) \;\; \text{and} \;
\; \bv =  \lambda, \Theta, k,l,N
\end{align*}
\noindent is positive, which implies there are three primes in the
interval $n + H := n + \{1, 2, \ldots, h\}$ ($H = [1,h]$) for some $n \in
[N+1, \ldots, 2N]$. For our derivation, we will modify their analysis in
the following way. Instead of considering one, we will consider three
intervals, where $h' = \delta h$, $\delta < 1/2$:

$$H_1 := \{i \in \mathbb{Z} : 0<i \leq h'\}, \; H_2 := \{i \in \mathbb{Z} : h'<i < h-h'\}, $$ $$H_3 := \{i \in \mathbb{Z} : h-h'\leq i \leq h\}.$$

If we could modify the difference $A( \bv )$ in such a way that positivity
not only implied that there are three primes in the interval $n + H$ but
also that one of these primes came from $n + H_2$ we could then guarantee
that $F_2 \leq (h - h')/ \log(3N)= \lambda (1 - \delta)$ whenever the
modified difference is positive. If no primes come from the central
interval and there are in total less than $3$ primes, the difference
(\ref{GPYDif}) is already negative. Moreover, if three primes lie in $n +
H_1$ or three primes lie in $n + H_3$, it would imply $F_2 \leq h'$, which
would imply $F_2 < \lambda (1- \delta)$ since $\delta < 1/2$. If there are
$5$ or more primes, either three primes come from an end interval or there
is one in the central interval. With a little consideration, we can see
that in order for positivity to imply our bound on $F_2$ all we need to do
is add another negative term to the difference (\ref{GPYDif}) which would
assure that the following `bad cases' also lead to the difference not
being positive:

\begin{enumerate}
\item Two primes in $n + H_1$, one in $n + H_3$ and no other in $n + H$
\item One prime in $n + H_1$, two in $n + H_3$ and no other in $n + H$
\item Two primes both in $n + H_1$ and in $n + H_3$ and no other in $n + H$.
\end{enumerate}

If the added term made these three cases negative as well, we could
guarantee that $F_2 \leq \lambda(1 - \delta)$. The proof strategy now
relies on the fact that the number of triples of primes coming from these
very short intervals ($H_1$ and $H_3$), should be approximately
proportionate to $\delta^3$ times the number of triples of primes coming
from the whole interval (if we assign each number $n$ an independent
probability of $1/\log n$ of being prime). As we expand the interval by a
factor of $1 + \delta$, the positive contribution from the first term in
$(\ref{GPYDif})$ grows linearly with respect to this factor. The negative
contribution adds substantially less to the overall sum than the positive
term increases when $\delta$ is small. This provides the leverage we need to lower the bound for $F_2$ even though our total interval is bigger than the one used in \cite{goldston-2005}'s proof for $\Delta_2$. \\

Consider the following term:
\begin{equation*} \label{sum1} B_1(\bv, \delta) := \displaystyle {\sum_{n = N} ^ {2N}}^*  \left( \sum_{|\HH|=k} \LH {}
\right)^2 \log(3N). \end{equation*} \noindent Where the starred summation
indicates that the term $n$ is only counted if the interval $n + H_1$ has
exactly two primes and $n + H_3$ has at least one prime. Let $B_2$ be the
same summation where $n+H_3$ has exactly two primes and $n + H_1$ has at
least one. One can see that subtracting $B_1 (\bv, \delta) + B_2 (\bv,
\delta)$ would satisfy the requirements that all three cases above would
not contribute positivity to the overall sum. So, if we can show for a
given choice of $\lambda$, $\delta$, that there exists $\Theta < 1/4$ and $k,l
\in \mathbb{N}$ such that $A(\bv) - B_1(\bv, \delta) - B_2 (\bv, \delta) >
0 $ it will imply $F_2 \leq \lambda(1 - \delta)$.

Now, we use a second set of weights in order to find an upper bound for
$B_1 (\bv, \delta)$, the analysis can be repeated identically for
$B_2(\bv, \delta)$: \vspace{.1in}

 $ \displaystyle B_1(\bv, \delta) \leq$

$\hfill \displaystyle \Omega_1 {\sum_{n
= N} ^{2N}} \sum_{\substack{A_1 \subset H_1, A_2 \subset H_3 \\ |A_1| = 2, |A_2| = 1}} \hspace{-.2in}
\Lambda_R(n, A_1 \cup A_2 ,3)^2  [ \sum_{\substack{\HH \subset^* H \\ |\HH|=k }} \Lambda_R(n,\HH \setminus (A_1
\cup A_2),k+l) ]^2  $ \vspace{.1in}

\noindent where, $ \Omega_1 := \left({ 36 \log(3N) / \log^6(R)} \right)$.
This bound holds because if for all $a \in (A_1 \cup A_2)$, $n + a$ is
prime,
\begin{equation*}
\Lambda_R(n,A_1 \cup A_2,3)^2 =  {\log^6(R) \over 36} \;\;\;\;\; \text{
and, }
\end{equation*}
\begin{equation*}
\Lambda_R(n,\HH ,k+l) =  \sum_{|\HH|=k} \Lambda_R(n,\HH \setminus (A_1
\cup A_2) ,k+l).
\end{equation*}
And in all other cases, the square of the terms assures positivity. Let
$\sum_{A_1, A_2}$ be the sum over all sets $A_1 \subset H_1, A_2 \subset
H_3$ such that $|A_1| = 2$, $|A_2| = 1$ and,
\begin{equation*}
S_1(\bv, \delta) := \sum_{n=N} ^{2N} \sum_{A_1 , A_2}\Lambda_R(n, A_1 \cup
A_2 ,3)^2 [ \sum_{\substack{\HH \subset H \\ |\HH|=k }} \Lambda_R(n,\HH
\setminus (A_1 \cup A_2),k+l) ]^2.
\end{equation*}
This makes the bound we are considering (notice that we do not consider
the sets with multiplicity $k!$ in $S_1$),
\begin{equation*}
B_1(\bv, \delta) \leq S_1(\bv, \delta) (k!)^2 {36 \over \log^6 (R)}
\log(3N).
\end{equation*}
\subsection{Simplifying the Equation}

The goal of this section is to provide an estimate for $S_1(\bv, \delta)$.
Fix the sets $A_1$, $A_2$, then,
\begin{align*}
 \sum_{\substack{\HH \subset H \\ |\HH|=k }} \Lambda_R(n,\HH \setminus (A_1 \cup A_2),k+l)
= \sum_{j=0}^3 {3 \choose j}\sum_{\substack{\HH \subset H, \; |\HH|=k-j
\\ (A_1 \cup A_2) \cap \HH = \emptyset}} \Lambda_R(n,\HH ,k+l).
\end{align*}
Where the $\displaystyle{3 \choose j}$ terms result from the choice of
which elements of $A_1 \cup A_2$ were removed from the set under
consideration. Letting $\displaystyle \sum_{\HH_1, \HH_2} ^ {j_1, j_2}$ be
the sum over all $\HH_1, \HH_2 \subset H$ such that $|\HH_1|
= k - j_1$ and $|\HH_2| = k - j_2$ and $(\HH_1 \cup \HH_2) \cap (A_1 \cup
A_2) = \emptyset$ (where the dependence on $A_1$ and $A_2$ of the
summation is understood but not noted),
\begin{align*}
S_1(\bv, \delta) =   \sum_{n=N} ^{2N} \sum_{A_1 , A_2} \Lambda_R(n, A_1
\cup A_2 ,3)^2 \;  [ \sum_{j=0}^3 {3 \choose j}\sum_{\substack{\HH
\subset H, \; |\HH|=k - j \\ (A_1 \cup A_2) \cap \HH = \emptyset}}\hspace{-.3in}
\Lambda_R(n,\HH,k+l) ] ^2 =
\end{align*}

$ \displaystyle
\sum_{n=N} ^{2N} \sum_{A_1 , A_2}\Lambda_R(n, A_1 \cup A_2 ,3)^2 \times$

$\displaystyle \hfill\sum_{j_1, j_2=0}^3 {3 \choose j_1} {3 \choose j_2} \sum_{\HH_1,
\HH_2}^{j_1, j_2} \Lambda_R(n,\HH_1 ,k+l)\Lambda_R(n,\HH_2,k+l) = \vspace{.2in}
$

$\hfill \displaystyle \sum_{j_1, j_2 = 0}^3 \sum_{r = 0}^{\min(k-j_1, k-j_2)} {3 \choose j_1} {3 \choose j_2}
S_1'(\bv, \delta ,j_1,j_2, r).$

\noindent Where, \\

$\displaystyle S_1'(\bv, \delta ,j_1,j_2, r) := \vspace{.2in}$

$\displaystyle \hfill \sum_{n=N} ^{2N} \sum_{A_1 , A_2}
\sum_{\HH_1, \HH_2}^{j_1, j_2,r} \Lambda_R(n, A_1 \cup A_2 ,3)^2
\Lambda_R(n,\HH_1 ,k+l)\Lambda_R(n,\HH_2 ,k+l),$

\noindent and $\displaystyle \sum_{\HH_1, \HH_2} ^ {j_1, j_2, r}$ is the
sum over all $\HH_1, \HH_2 \subset \HH$ such that $|\HH_1| = k
- j_1$ and $|\HH_2| = k - j_2$, $(\HH_1 \cup \HH_2) \cap (A_1 \cup A_2) =
\emptyset$ and $|\HH_1 \cap \HH_2 | = r$ (once again there is an unnoted
dependence on $A_1$ and $A_2$). By {\it Theorem 1} (recall that if
$|\HH_1|=k-j_1$ and the third argument of the $\Lambda$ function is $k+l$,
the $l$ value increases to $l + j_1$),
\\

\noindent $\displaystyle S_1'(\bv, \delta ,j_1,j_2, r) =N {2l +j_1 + j_2 \choose l + j_1} {(\log R) ^ 3 (\log R)^{2l + r + j_1 +
j_2} \over 3! (2l + r + j_1 + j_2)!} \times $

$\displaystyle \hfill \sum_{A_1, A_2} \sum_{\HH_1,
\HH_2}^{j_1, j_2, r} (\gs(A_1 \cup A_2 \cup \HH_1 \cup \HH_2)  +  o_M (1)).$

Since $|H_1| = |H_3| = \delta |H|$ where $ \delta $ is an absolute constant , the growth condition for {\it
Theorem 2} is satisfied where the subintervals are taken as $[B_1, C_1] = H_1$, $[B_2, C_2] = H_2$, $[B_3,
C_3]$,$[B_4, C_4]$, $[B_5, C_5] = H$ and $2$ elements are taken from $[B_1, C_1]$, one from $[B_2, C_2]$, $k -
j_1 - r$ taken from $[B_3, C_3]$, $k - j_2 - r$ from $[B_4, C_4]$ and $r$ from $[B_5, C_5]$. The elements from
the first interval correspond to the elements of $A_1$, the second of $A_2$, the third of $\HH_1 \setminus \HH_2$,
the fourth of $\HH_2 \setminus \HH_1$ and the elements from the fifth interval to the elements of $\HH_1 \cap \HH_2$.
We can almost use the theorem, except that we have the additional restriction that each of the five sets are
disjoint (By definition the $A_i$ sets are disjoint from each other and any $\HH_i$. Obviously the sets $\HH_1
\setminus \HH_2$, $\HH_1 \cap \HH_2$ and $\HH_2 \setminus \HH_1$ are disjoint). Since we showed in the proof of {\it
Theorem 2} that each of the singular series is bounded above by a constant power of $\log h$, the sum of the
singular series where the sets are not disjoint is $\ll_M (\log^{C_M}h) h^{2k-r-j_1-j_2+2}$. It follows from the
Theorem and the previous fact that the sum over disjoint sets we consider dominate the magnitude of the entire
sum that:

$ \displaystyle
\sum_{A_1, A_2} \sum_{\HH_1, \HH_2}^r \gs(A_1 \cup A_2 \cup \HH_1 \cup
\HH_2) = {\delta^3 h^{2k-r -j_1 - j_2 + 3} \over 2! (k - j_1 -r)! (k- j_2 -r)! r!} \times $

$\hfill \left(1 + o_M(1) \right).
$

\noindent Therefore, \vspace{.2in}

$S_1'(\bv, \delta ,j_1,j_2, r) = \displaystyle
N {2l +j_1 + j_2 \choose l + j_1} {(\log R) ^ 3 \over 6} {(\log R)^{2l + r
+ j_1 + j_2} \over (2l + r + j_1 + j_2)!} \times \vspace{.1in} $

$\hfill \displaystyle {\delta ^3 h^{2k-r - j_1
- j_2  + 3} \over 2 (k-j_1-r)! (k-j_2 -r)! r!}(1 + o_M(1)).
$

It is now possible to step back to evaluate $S_1(\bv, \delta)$. Define the
following notation (with an empty product being defined as $1$):

$$
\gamma (j_i,k,r) = (k-r)(k-r-1) \ldots (k-r-j_i+1),$$ $$ \beta(j_1,
j_2,l,r) = (r+2l+1) (r+2l+2)\ldots (r+2l+j_1+j_2),
$$
$$
 a(j_1, j_2,l) = {2l +j_1 +
j_2 \choose l + j_1} {2l \choose l}  ^ {-1},$$ $$ \mu (j_1, j_2,k,l,r) =
{\gamma(j_1,k,r) \gamma(j_2,k,r) a(j_1,j_2,l) \over \beta(j_1, j_2,r,l)}
{3 \choose j_1} {3 \choose j_2}.
$$

We define this in order to put our previous derivation into a form more comparable with the work of
\cite{goldston-2005}. Notice,

$\displaystyle \left({k! k! \over (k-j_1-r)! (k-j_2-r)! r! (2l +r +j_1 +j_2)!}
\right) = $

\begin{align*} &   {k \choose r}^2 {r! \over (2l + r)!} \left({\gamma(j_1,k,r)
\gamma(j_2,k,r) \over \beta(j_1, j_2,r,l)} \right) =\\
&  \displaystyle {k \choose r}^2
{1 \over (r+1)(r+2) \ldots (r +2l)} \left({\gamma(j_1,k,r) \gamma(j_2,k,r)
\over \beta(j_1, j_2,r,l)} \right). \end{align*}

With this, it is possible to restate the $S_1$ function. Defining $x =
\log R / h$, and $\chi := N (\log R)^{2l +6} h^{2k} / (k!)^2$,

\begin{eqnarray*}
\lefteqn{S_1(\bv, \delta) \sim} \\
& & \chi {2l  \choose l }\sum_{j_1,
j_2 =0}^3 \sum_{r=0}^{\min(k-j_1, k-j_2)} {k \choose r}^2 {\delta^3
x^{r+j_1+j_2-3}\mu(j_1, j_2,k,l,r) \over 12 (r+1 ) \ldots (r+2l)}  \leq \\
& & \chi{2l  \choose l }\sum_{j_1, j_2
=0}^3 \sum_{r=0}^{k} {k \choose r}^2 {\delta^3 x^{r+j_1+j_2-3}\mu(j_1,
j_2,k,l,r) \over 12
(r+1 ) \ldots (r+2l)}  = \\
& & \chi{2l  \choose l }
\sum_{r=0}^{k} {k \choose r}^2 { x^{r} \over (r+1 )\ldots (r+2l)}
\sum_{j_1, j_2 =0}^3 {\delta^3 x^{j_1+j_2-3} \mu(j_1, j_2,k,l,r) \over 12
}.
\end{eqnarray*}

And therefore, an upper bound for our original sum is :\\

$ \displaystyle B_1(\bv, \delta) = {\sum_{n=N+1}^{2N}}^* \left(
\sum_{|\HH|= k} \Lambda_R(n, \HH, k+l) \right) ^2 \log (3N) \leq$ \vspace{.1in}

$\displaystyle N (\log
R)^{2l} h^{2k} \log(3N) {2l  \choose l } \sum_{r=0}^{k}
{k \choose r}^2 { x^{r} \over (r+1 )\ldots (r+2l)} \times$

$\hfill \displaystyle  \sum_{j_1, j_2 =0}^3 {3
\delta^3 x^{j_1+j_2-3} \mu(j_1, j_2,k,l,r)  } (1 + o_M(1)) \sim$

$\hspace{-.12in}\displaystyle N  h^{2k+1} {2l  \choose l } (\log R)^{2l}
\sum_{r=0}^{k} {k \choose r}^2 { x^{r} \over (r+1 )\ldots (r+2l)}\times $

$\hfill \displaystyle \left[
{3 \delta^3 \over \Theta x^2} \right]\sum_{j_1, j_2 =0}^3 {
x^{j_1+j_2} \mu(j_1, j_2,k,l,r) }$,\\

\noindent with identical reasoning giving the same bound for $B_2(\bv,
\delta)$. In \cite{goldston-2005} they derive the two facts that:

$\displaystyle \sum_{n=N+1}^{2N} 2\log N \left(\sum_{|\HH|=k}
\Lambda_R(n,\HH,k+l) \right)^2 \sim $ \vspace{.1in}

$\displaystyle \hfill 2 N h^{2k} \log (3N) {2l \choose l} (\log R)^{2l}
\sum_{r=0}^{k} {k \choose r}^2 {x^r \over (r+1) \ldots (r+2l)}$, \vspace{.2in}

$\displaystyle \sum_{n=N+1}^{2N} \left(\sum_{1 \leq h_0 \leq h}
\theta(n+h_0) \sum_{|\HH|=k} \Lambda_R(n,\HH,k+l) \right)^2 \sim $ \vspace{.1in}

$\displaystyle \hfill N h^{2k+1} {2l \choose l} (\log R)^{2l}
\sum_{r=0}^{k} {k \choose r}^2 {x^r \over (r+1) \ldots (r+2l)} \left(
{2a(1,0,l)k \over r + 2l + 1}x + 1 \right)$.

It follows that if after factoring out: $${2l \choose l}N h^{2k+1} (\log
R)^{2l}$$ (which controls all dependence on $N$) the remaining factor is
positive, then $F_2 \leq \lambda (1 - \delta)$. Therefore, it suffices to show
(where the $3$ in the final term becomes a $6$ because we are considering
both $B_1$ and $B_2$):

$ \displaystyle
\sum_{r=0}^{k} {k \choose r}^2 {x^r \over (r+1) \ldots (r+2l)}\times$

$\displaystyle \hfill \left(
{2a(1,0,l)k \over r + 2l + 1}x + 1 - {2x \over \Theta}  - {6 \delta^3
\over \Theta x^2} \sum_{j_1, j_2 =0}^3 x^{j_1+j_2} \mu(j_1, j_2,k,l,r)
\right)
$

\noindent is positive. This will imply $F_2 \leq  \lambda ( 1 - \delta)$.

\subsection{The Derivation}
First notice that by bounding each component individually,
$$\mu (j_1, j_2,k,l,r) \leq 2^{j_1 + j_2} \left( k-r \over r \right) ^{j_1 + j_2} {3 \choose j_1} {3 \choose j_2}$$
\noindent which implies,
$$\sum_{j_1, j_2 =0}^3
x^{j_1+j_2} \mu(j_1, j_2,k,l,r) \leq \left( \sum_{j=0}^3 {3 \choose j} 2^j
x^j \left( k-r \over r \right)^j  \right)^2 = \left( 2x {k-r \over r} +1
\right)^6.$$

There are now two parts of the proof remaining. Let (similarly to
\cite{goldston-2005}),

$$
f(r) = {k \choose r}^2 {x^r \over (r+1) (r+2) \ldots (r+2l)},
$$

$$
P(r, \delta) =  {2a(1,0,l)k \over r + 2l + 1}x + 1 - {2x \over \Theta}  -
{6 \delta^3 \over \Theta x^2} \left( 2x {k-r \over r} + 1 \right)^6. $$

We will show that with good choices for $k,l,\lambda, \Theta, \delta$,
$\sum_{r = 0} ^k f(r)P(r, \delta)$ will be positive, giving the bound that
$F_2 \leq \lambda(1 - \delta) $. In a sense, $f(r)$ will contribute to the
magnitude of the $r^{th}$ term while $P(r, \delta)$ will control the sign.
First, notice that the term $f(r)$ is maximized when (one can justify this
heuristic by a little computation, but it is not necessary, the decay when
the terms are much bigger or smaller will be shown shortly),
$$r \sim {k \over z+1} \text{ where } z = {1 \over \sqrt{x}}\text{. Let } r_0 = \left[ k \over z + 1 \right].$$

We would now like to find with a given choice of $x$, what the maximal choice of $\delta$ can be such that the
maximal term is positive (it is possible that a given $x$ will have no valid corresponding $\delta$ if the
interval we are considering is simply too small. In these cases there will be a contradiction with the
definition of $\delta$, giving a value that is not in $[0, 1/2)$ as $\delta$ was defined to be). If we took
$\Theta = 1/4$ when in reality it can only be taken to be $1/4 - \epsilon$, $k/l = k$, when in reality it can
only be taken as $k ( 1 - \epsilon)$ (in applications in \cite{goldston-2005} $l = o(k)$)one has, in a sense,
the function $P(r, \delta)$ `approaches'. Call this function $P' (r, \delta)$ and notice:
$$P' \left({ k \over z+1}, \delta \right) = {4k \over { k \over z+1}}x + 1 - {8x} - {24 \delta^3 \over x^2} \left( 2x {k-{ k \over z+1} \over { k \over z+1}} + 1 \right)^6 > 0 \Leftrightarrow $$
\begin{center}
(Since $x \neq 0$ in this application, we can divide by it. Also, recall $x = z^{-2}$) \end{center}
$$  4 (z + 1) + z^2 - 8  - 24 \delta^3 \left( z + 2  \right)^6 > 0 \Leftrightarrow$$
$$ ( z + 2)^2 - 8 - 24 \delta^3 \left( z + 2  \right)^6 > 0.$$
Which is equivalent with:
\[ \delta ^3 \leq {( z + 2)^2 - 8 \over 24 \left( z + 2  \right)^6}.
\]

So, if $\delta$ is exactly the cube root of the value on the right above, $P'(k / (z + 1))=0$. If it is
below, one can check easily that $P'(k/(z+1))$ is positive.

\begin{theorem}
Let $\lambda > 0$, $z = 2 \sqrt{ \lambda }$ and $\delta' = \displaystyle \sqrt[3]{( z + 2)^2 - 8 \over 24 \left(
z + 2  \right)^6}$. Then, if $\delta' \in [0, 1/2)$, we have $F_2 \leq \lambda (1 - \delta')$.
\end{theorem}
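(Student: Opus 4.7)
The plan is to verify the sufficient condition derived above, namely that
\[
S(\delta) := \sum_{r=0}^{k} f(r)\, P(r,\delta) > 0,
\]
for a suitable choice of $k$, $l$, and $\Theta$, since this is enough to conclude $F_2 \leq \lambda(1-\delta)$. The key observation is that the idealized function $P'(r,\delta)$, obtained by letting $\Theta \to 1/4$ and $l/k \to 0$ in $P$, satisfies
\[
P'\!\left(\tfrac{k}{z+1},\delta\right) = \frac{(z+2)^2 - 8 - 24\delta^3(z+2)^6}{z^2},
\]
which vanishes precisely at $\delta = \delta'$ and is strictly positive for $\delta < \delta'$. So the substantive task is to upgrade this pointwise positivity of the idealized $P'$ at $r_0 = k/(z+1)$ to positivity of the actual weighted sum.

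First I would fix $\delta \in (0,\delta')$, set $r_0 = \lfloor k/(z+1)\rfloor$, and choose $\Theta$ close enough to $1/4$ together with $l$ satisfying $l/k \to 0$ but $l \to \infty$ (so that $a(1,0,l) \to 2$) in order to force $P(r_0,\delta) \geq \tfrac{1}{2} P'(r_0,\delta) > 0$ for all sufficiently large $k$. Next I would exploit the sharp concentration of the weight $f$ around $r_0$: comparing the ratios $f(r+1)/f(r)$ shows that $f$ is unimodal with maximum near $r_0$ and decays in Gaussian fashion on the scale $|r-r_0| \sim \sqrt{k}$, giving $f(r) \ll f(r_0) \exp(-c(r-r_0)^2/k)$. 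On a central window $|r-r_0| \leq k^{2/3}$, the ratio $(k-r)/r$ appearing in the negative term of $P(r,\delta)$ varies continuously, so $P(r,\delta)$ stays positive throughout by continuity from the central value.

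The remaining step is a tail estimate. For $r > r_0 + k^{2/3}$ the subtracted term $(2x(k-r)/r+1)^6$ is strictly smaller than at $r_0$, hence $P(r,\delta) > 0$ automatically. The genuinely dangerous region is small $r$, where the crude bound $\mu(j_1,j_2,k,l,r) \leq 2^{j_1+j_2}((k-r)/r)^{j_1+j_2}\binom{3}{j_1}\binom{3}{j_2}$ grows polynomially in $k$. However, for $r \leq r_0 - k^{2/3}$ the weight $f(r)$ is smaller than $f(r_0)$ by a factor $\exp(-ck^{1/3})$, which easily dominates any polynomial-in-$k$ growth. Near $r=0$ itself, where the naive bound on $\mu$ formally blows up, one evaluates $\mu(j_1,j_2,k,l,0)$ directly from its definition in terms of $\gamma,\beta,a$, which is polynomial in $k$ and finite, while $f(0) = 1/(2l)!$ is negligible. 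Summing over all tail terms then yields
\[
\sum_{|r-r_0|>k^{2/3}} f(r)|P(r,\delta)| = o\!\bigl(f(r_0)P(r_0,\delta)\bigr),
\]
so $S(\delta) > 0$ for $k$ large, yielding $F_2 \leq \lambda(1-\delta)$.

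Since this holds for every $\delta \in (0,\delta')$ and since $F_2$ is defined by a liminf, letting $\delta \uparrow \delta'$ delivers $F_2 \leq \lambda(1-\delta')$. The main obstacle is precisely the tail bound near $r=0$, where the naive bound on $\mu$ breaks down: one must avoid it by using the exact, non-singular expression for $\mu(j_1,j_2,k,l,r)$ at very small $r$, and verify that the super-exponential decay of $\binom{k}{r}^2 x^r$ away from the peak strictly dominates the polynomial-in-$k$ growth coming from $\mu$ together with the factor $((k-r)/r)^6$.
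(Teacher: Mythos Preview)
Your overall strategy---localize the weighted sum near $r_0=k/(z+1)$, show $P(r,\delta)>0$ there, and kill the tails using the sharp concentration of $f$---is exactly what the paper does. Your Gaussian heuristic $f(r)\ll f(r_0)\exp(-c(r-r_0)^2/k)$ is a legitimate refinement of the paper's cruder ratio argument, which instead takes a window of width $\nu k$ (linear in $k$) and shows $f(r+1)/f(r)$ is bounded away from $1$ outside it; either version beats the polynomial-in-$k$ size of $|P|$ on the tails.

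There is, however, one concrete error. You claim that for $r>r_0+k^{2/3}$ the subtracted term $(2x(k-r)/r+1)^6$ is smaller than at $r_0$ and hence $P(r,\delta)>0$ automatically. That inference is false: the positive term $2a(1,0,l)kx/(r+2l+1)\approx 4kx/r$ in $P$ also decreases as $r$ grows, and in the relevant parameter range ($z$ just above $2\sqrt2-2$, so $x\approx 1.45$) one has $P'(k,\delta)\approx 1-4x\approx -4.8$, so $P$ is genuinely negative on part of the right tail. The paper does not claim $P>0$ there; it handles the right tail exactly as the left, showing $f(r+1)/f(r)\le 1-c'(\nu)$ for $r>r_0+\nu k/2$ and absorbing the polynomial bound on $|P(r,\delta)|$ into the exponential decay of $f$. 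Your own Gaussian estimate already supplies what is needed: simply treat the region $r>r_0+k^{2/3}$ symmetrically with the left tail rather than asserting positivity of $P$ there.
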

\begin{proof}

Fix $\lambda$ and $\delta'$ for the rest of the proof that satisfy the above conditions (any constants from here on may depend on $\lambda$ and $\delta$). We claim that for any $\delta < \delta' $ the total sum is positive assuming the $k,l$ values are sufficiently large with $l = o(k)$ and $\Theta = (1/4)(1 - (1/l))$. This will imply that $F_2 \leq \lambda (1 - \delta)$. Since $\delta$ can be taken arbitrarily close to $\delta'$ this will imply $F_2 \leq \lambda ( 1 - \delta')$. \\

\begin{lemma}
Let $\delta = \delta' \sqrt[3]{1- \epsilon}$. Then, there is a small constant $\nu > 0$, which depends on
$\epsilon$ such that for $k,l > C (\epsilon)$ and $l/k < c (\epsilon)$, we have $P(r) > c'(\epsilon)>0$ for $r \in [r_0
- \nu k, r_0 + \nu k]$.
\end{lemma}

\begin{proof}
Take $r \in [r_0 - \nu k, r_0 + \nu k]$, we will analyze each term of
$P(r, \delta)$ separately (recalling that we have set $x$ as a constant
 $+ O(l^{-1})$ ($x = \Theta / \lambda$) and $k / r_0$ as a constant $+
O(k^{-1})$). As $k, l \to \infty$, $l = o(k)$ and $\nu \to 0$,

$$ {2a(1,0,l)k \over r + 2l + 1}x  \geq {4 \over r_0/k} x + O(l/k) + O(l^{-1}) + O(\nu),
$$
$$1 - {2x \over \Theta} = 1 - 8x + O(l^{-1}),
$$
$ \displaystyle
{6 \delta^3 \over  x^2 \Theta} \left( 2x {k-r \over r} + 1 \right)^6 \leq 
{24 \delta'^3 \over  x^2}\left( 2x \left({k \over r_0 } -1\right) + 1
\right)^6 + O(\nu) - c (\epsilon)+ O(l^{-1}) ,
$

\noindent with $c(\epsilon) > 0$. Combining these, one sees a
correlation with $P'(k/z+1, \delta')$,
\begin{align*}
P(r, \delta) &\geq P'(k / z+1, \delta') + c (\epsilon) + O(l/k) + O(l^{-1}) + O(k^{-1}) + O(\nu) \\
&\geq c (\epsilon) + O(l/k) + O(l^{-1}) + O(k^{-1}) + O(\nu)
\end{align*}

\noindent which proves the statement. We first fix $\epsilon$ and then
select $l,k$ big enough and $\nu$ small enough such that $P(r, \delta)
\geq c (\epsilon)/2$ for all $r \in [r_0 - \nu k, r_0 + \nu k]$.
\end{proof}

The rest of the proof will proceed in the following manner: First we will
show that for $r < r_0 - {\nu \over 2} k = r_1$ or $r > r_0 + {\nu \over
2}k = r_2$, the values $f(r)$ rapidly decrease by at least a constant
factor in magnitude. This will imply the negative terms (which are smaller
than $r_0 - \nu k$ and greater than $r_0 + \nu k$) will all be
exponentially small in $k$ and their total sum can be bounded by the $r_0$
term. For notational simplicity, let $\nu' = \nu/2$.

First, we begin analyzing the terms below $r_1$. Take any $ r < r_1$.
\begin{eqnarray*}
{f(r+1) \over f(r)} &=& \left( k-r \over r+1 \right) ^2 { x (r+1) \over r+2l
+1} > \left( k-r \over r+1 \right) ^2 { x r \over r+2l
+1}.
\end{eqnarray*}

Notice first that, since $r < ck$ where $c < 1$:

$\displaystyle {(k-r)(r) \over (r+ 2l + 1)(r+1)}=$
\begin{align*}
  & \left( { {r^2 \over kr -
r^2} + {(2l+2)r \over kr - r^2} + {2l + 1 \over kr - r^2}
}\right)^{-1} = \left( { {r \over (k - r)} + O(l/k) }\right)^{-1} \geq \\
&   \left( { r_0 - \nu' k \over k - r_0 + \nu' k} + O(l / k)
\right)^{-1}
 =  \left( { k - r_0 + \nu' k  \over r_0 - \nu' k}\right) ( 1 + O(l/k)) \geq\\
&  \left( {k \over r_0}(1 + c(\nu)) - 1 \right) (1 + O(l/k)) \text{ where } c(\nu) > 0 \\
& \geq  \left( {k \over r_0} - 1\right) ( 1 + c(\nu)/2) \text{ for }k/l
\text{ small enough. Secondly,}
\end{align*}

$\displaystyle{k-r \over r + 1} \geq {k - r_0 \over r_0} \text{ for } k ,
l \text{ sufficiently large. Therefore,}$
\begin{eqnarray*}
{f(r+1) \over f(r)} &>& \left( k-r \over r+1 \right) ^2 { x r \over r+2l +1} \;\;\; \text{ which by the definition of } r_0 \text{ is,}\\
& \geq & \left(k - r_0 \over r_0 \right) ^2 x (1 + c(\nu)/2)  = 1 +
c'(\nu)/2 + O(k^{-1}) + O(l^{-1}).
\end{eqnarray*}
So, as the terms go below the $r_1 ^{\text{st}}$ the ratio between any two terms decreases by a constant factor.
Therefore, the sum of all terms below $r_0 - \nu k$ is a polynomial in $k$ (there are at most $k$ such terms and
$P(r, \delta)$ is bound easily by a polynomial in $k$) times an inverse exponential in $k$ times the $r_0$ term.
The $r_0$ term will therefore be greater in magnitude than (any constant multiple times) the sum of all negative
terms $r$ with $r < r_1$ for $k$ sufficiently large. Now we will show the same holds with $r
> r_2$ (let $c(\nu)$ and $c'(\nu)$ denote small positive constants
depending on $\nu$), \\

$ \displaystyle
{f(r+1) \over f(r)} \leq \left( k-r \over r+1 \right) ^2 x
\leq \left({k \over r} - 1 \right)^2 x \leq \left( {k \over r_0 + k\nu' } - 1 \right)^2 x \leq \\$

$ \displaystyle
\left( {k \over r_0} (1 - c(\nu)) - 1 \right)^2 x  \leq (1 - c'(\nu)) \left( {k \over r_0} -1\right)^2 x = 1 - c'(\nu) +
O(k^{-1} + l^{-1}).
$

Therefore, the magnitude of these terms decay exponentially. Since, as
before $P(r, \delta)$ is bounded above by a polynomial in $k$ and $f(r)$
is bounded above by an inverse exponential in $k$ when compared to the
$r_0$ term, the sum of all such $r > r_2$ can easily be bounded under half
of the magnitude of the term at $r_0$. This completes the proof of
positivity, which implies the theorem.
\end{proof}

Notice that if we take $\lambda = (\sqrt{2} - 1)^2$, this implies $\delta
= 0$ and the necessary condition is satisfied. This is precisely the
result implied by \cite{goldston-2005}. One can check that if we increase
$\lambda$ by a very small amount so that the $\delta'$ value increases and
stays within the allotted interval (since it varies continuously with
respect to $\lambda$) the value $\lambda ( 1 - \delta')$ will initially
decrease. This proves the theorem. One can numerically check that taking
$\lambda = .172$ implies $\delta' \sim .007794$ which implies $F_2 < .172
(1 - .007794) < .17066$.

\bibliographystyle{plain}
\bibliography{Works}

\end{document}